\renewcommand{\Re}{\operatorname{Re}}
\newtheorem{thm}{Theorem}[section]
\newtheorem{lem}[thm]{Lemma}
\newtheorem{prop}[thm]{Proposition}
\newtheorem{defn}{Definition}[section]
\newtheorem{exmp}{Example}[section]
\newtheorem{rmk}{Remark}[section]
\numberwithin{equation}{section}
\renewcommand{\Re}{\operatorname{Re}}
\def \R{{\mathbb{R}}}
\def \P {\Phi(x)}
\def \o {\omega(x)}
\def \japxik{\langle \xi \rangle_k}
\def \japx{\langle x \rangle}
\def \hyp{Z_{ext}(N)}
\def \pd{Z_{int}(N)}
\def \J{[0,T] \times \R^{2n}}
\def \la{\langle}
\def \ra{\rangle}
\def \ran{\rangle_k}
\def \rak{\rangle_k}
\def \wm{(\omega,g_{\Phi,k})}
\def \t{\theta(t)}
\def \g{\gamma}
\providecommand{\keywords}[1]
{
	\small	
	\textbf{\text{Keywords:}} #1
}
\providecommand{\subclass}[1]
{
	\small	
	\textbf{\text{MSC(2010):}} #1
}
\title{\Large Global Well-Posedness of a Class of Hyperbolic Cauchy Problems with Coefficients Sublogarithmic in Time\thanks{Dedicated to Bhagawan Sri Sathya Sai Baba on the Occassion of His 96th Birthday.}}
\author{\normalsize Rahul Raju Pattar\thanks{rahulrajupattar@gmail.com (Corresponding Author)} , N. Uday Kiran\thanks{nudaykiran@sssihl.edu.in}  \\
	\small Department of Mathematics and Computer Science\\
	\small Sri Sathya Sai Institute of Higher Learning, Puttaparthi, India \\
}
\date{}
\begin{document}
	
	\maketitle
	
	\begin{abstract}
		The goal of this paper is to study global well-posedness, cone of dependence and loss of regularity of the solutions to a class of strictly hyperbolic equations with coefficients displaying \enquote{mild} blow-up of sublogarithmic order - $|\ln t|^\gamma,\gamma\in(0,1).$
		The problems we study are of strictly hyperbolic type with respect to a generic weight and a metric on the phase space.
		The coefficients are polynomially bound in $x$ with their $x$-derivatives and $t$-derivative of order $\textnormal{O}(t^{-\delta}),\delta \in [0,1),$ and $O(t^{-1}|\ln t|^{\gamma-1}), \gamma\in(0,1),$ respectively. 
		We employ the Planck function associated with the metric to subdivide the extended phase space and define appropriate generalized parameter dependent symbol classes.
		To arrive at an energy estimate, we perform a conjugation by a pseudodifferential operator. This operator explains the loss of regularity by linking it to the metric on the phase space and the singular behavior. We call the conjugating operator as {\itshape{loss operator}}.
		We report that the solution experiences an arbitrarily small loss in relation to the initial datum defined in the Sobolev space tailored to the loss operator.  \\
		\keywords{Loss of Regularity $\cdot$ Strictly Hyperbolic Operator with non-regular Coefficients $\cdot$ Sublogarithmic Singularity $\cdot$ Metric on the Phase Space $\cdot$ Global Well-posedness $\cdot$ Pseudodifferential Operators}\\
		\subclass{35L81 $\cdot$ 35L15 $\cdot$ 35B65 $\cdot$ 35B30 $\cdot$ 35S05 }
	\end{abstract}
	
	
	\section{Introduction}		
	We study the Cauchy problem for the strictly hyperbolic equation: 
	\begin{linenomath*}
	\begin{equation}\label{eq01}
		\partial_t^2u - \sum_{i,j=1}^{n} a_{i,j}(t,x)\partial_{x_i} \partial_{x_j}u + \sum_{j=1}^{n} b_{j}(t,x)\partial_{x_j}u + b_{n+1}(t,x)u=f(t,x),
	\end{equation}
	\end{linenomath*}
	with initial data $u(0,x)=f_1(x), \; \partial_tu(0,x)=f_2(x)$,  $(t,x)\in [0,T] \times \R^n.$
	Here $a_{i,j}\in C^1((0,T];C^\infty(\R^n))$ and $b_{j},b_{n+1}\in C([0,T];C^\infty(\R^n)), \; i,j=1,\dots,n,$ satisfy the following estimates
	\begin{linenomath*}
	\begin{align}
		\label{ell}
		a_{i,j}(t,x) & \geq C_{0} \o^2,\\ 
		\label{sing}
		|\partial_x^\beta\partial_t a_{i,j}(t,x)| & \leq C_{\beta} \frac{1}{t}  \left( \ln \left(1+\frac{1}{t}\right) \right)^{\gamma(1+|\beta|)-1}\o^2\P^{-|\beta|}\\
		|\partial^\beta_xb_j(t,x)| &\leq C_{\beta}' \; \o\P^{-|\beta|},\nonumber
	\end{align}
	\end{linenomath*}
	where $\gamma \in (0,1)$ and $\beta \in \mathbb{N}_0^n$, $C_{0},C_{\beta},C_{\beta}'>0$.
	Here the functions $\o$ and $\P$ are positive monotone increasing in $|x|$ such that $1\leq \o \lesssim \P\lesssim \japx = (1+|x|^2)^{1/2}.$ The functions $\omega$ and $\Phi$ specify the structure of the differential equation in the space variable. These functions will be discussed in detail in Section \ref{tools}.  
	
	From (\ref{sing}) with $|\beta| =0$, we have
	\begin{linenomath*}
		\[
		\vert a_{i,j}(T,x) - a_{i,j}(t,x) \vert \leq \int_t^T \vert \partial_s a_{i,j}(s,x) ds \vert \leq C \left(\ln \left(1+ \frac{1}{t} \right)\right)^{\gamma} \o^2.
		\]
	\end{linenomath*}
	Implying  
	\begin{equation}\label{logblow}
		\vert a_{i,j}(t,x) \vert \leq C \left( \ln  \left(1+\frac{1}{t} \right)\right)^{\gamma}\o^2,
	\end{equation}
	i.e., the coefficient $a(t,x)$ has a sublogarithmic blow-up at $t=0.$ 
	\begin{rmk}\label{rmk}
		We can also replace (\ref{sing}) with a slightly general estimates similar to the ones given in \cite{Cico1,RU2} by
		\begin{equation}\label{sing2}
			\begin{rcases}
				\begin{aligned}
					|\partial_x^\beta a_{i,j}(t,x)| & \leq C_{\beta} \frac{1}{t^{\delta_1}} \o^2\P^{-|\beta|}, \quad |\beta| >0,\\
					|\partial_x^\beta\partial_t a_{i,j}(t,x)| & \leq C_{\beta} \frac{(\ln(1+1/t))^{\gamma-1}}{t^{1+\delta_2|\beta|}} \o^2\P^{-|\beta|}, \quad |\beta| \geq 0,
				\end{aligned}
			\end{rcases} \tag{\ref{sing}${}^*$}
		\end{equation}
		for $\delta_1,\delta_2 \in [0,1).$
	\end{rmk}
	Examples of functions satisfying (\ref{ell}) and (\ref{sing2}) are given below.
	
	\begin{exmp}\label{BLOS2}
		Let $n=1$, $T=1$, $\g \in (0,1)$, $\kappa_1 \in[0,1]$ and $\kappa_2\in(0,1]$ such that $\kappa_1 \leq \kappa_2$. Then, 
		\begin{linenomath*}
			$$
			\japx^{2\kappa_1} \left(2+\sin \left( \japx^{1-\kappa_2} + \cos x \; \left(\ln(1+1/t)\right)^\gamma\right) + \left( 2+\cos  \japx^{1-\kappa_2} \right)\left(\ln(1+1/t)\right)^\gamma\right)
			$$
		\end{linenomath*}
		satisfies the estimates (\ref{sing2}) for $\o = \japx^{\kappa_1}$, $\P = \japx^{\kappa_2}$ and for any $\delta_1,\delta_2 \in (0,1)$.
	\end{exmp}
	\begin{figure}[h!]
		\centering
		\includegraphics[scale=0.65]{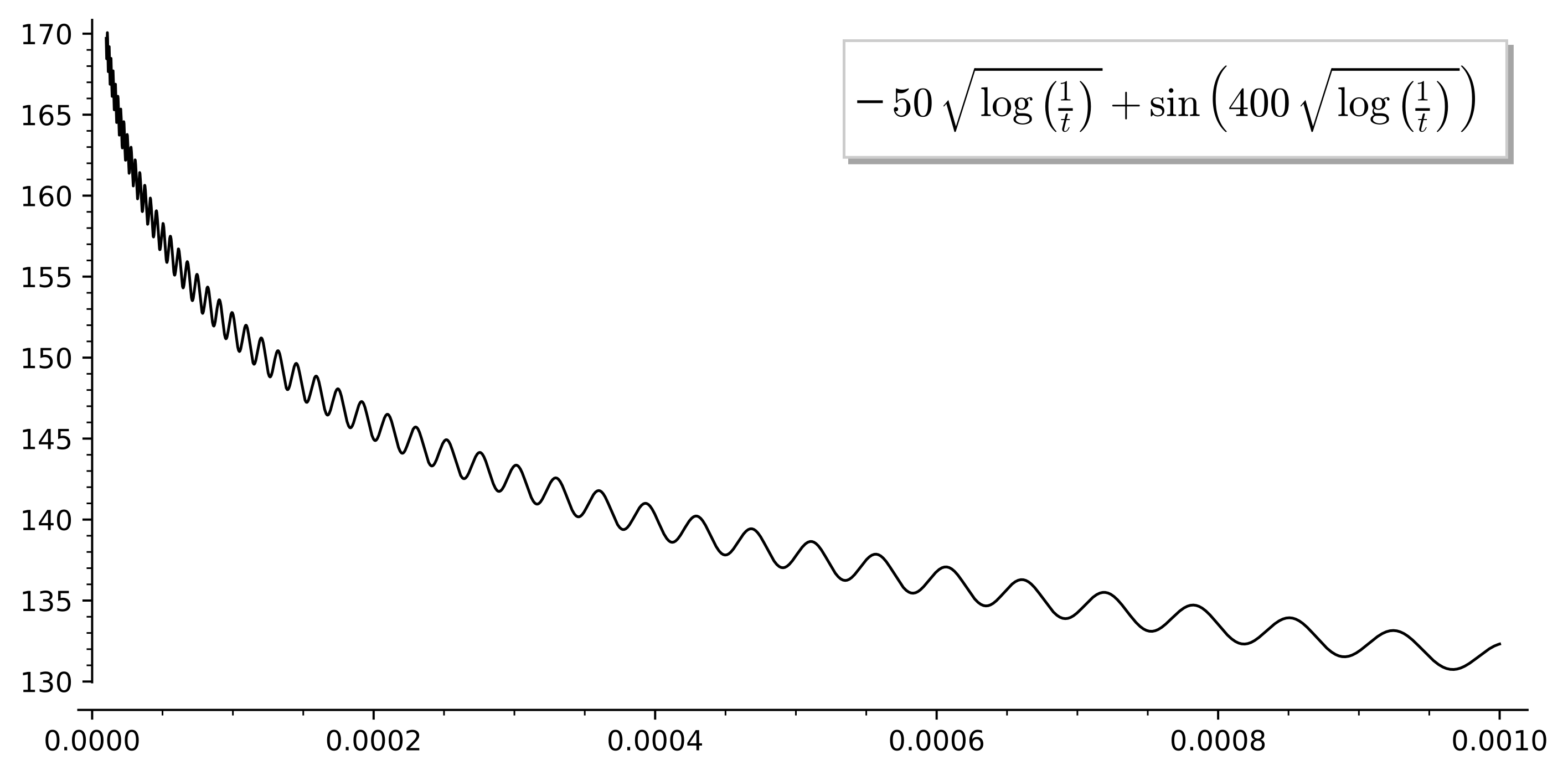}
		\caption{Graph of Example \ref{BLOS}.}
		\label{Msublog}
	\end{figure}
	\begin{exmp}\label{BLOS}
		Let $T=0.1$ and $n=1$. Then,
		\[
			50\sqrt{\ln(1/t)} + \sin(400\sqrt{\ln(1/t)})
		\]
		satisfies the estimates (\ref{sing2}) for $\gamma=\frac{1}{2}$, $\o = \P = 1$ and $\delta_1=\delta_2=0$.
	\end{exmp}

	As evident from (\ref{sing2}-\ref{logblow}) and Examples \ref{BLOS2}-\ref{BLOS}, we consider coefficients that are singular at $t=0$. The singular behavior is characterized by either sublogarithmic blow-up or infinitely many oscillations near $t=0.$
	Below is the scale for the rate of oscillations which is inspired from the one given in \cite{Reis}. 	
	\begin{defn}[Oscillatory Behavior]
		Let $c=c(t) \in L^{\infty}([0,T]) \cap C^2((0,T])$ satisfy the estimate
		\begin{equation}\label{oscill}
			\left\vert \frac{d^j}{dt^j} c(t) \right\vert \lesssim \left( \frac{\vert\ln t  \vert^{\gamma {\bf I}_q}}{t^q}  \right)^j,  
		\end{equation}
		for $j=1,2$ and $q \geq 1.$ The function ${\bf I}_q$ is such that ${\bf I}_q \equiv 1$ if $q=1$ else ${\bf I}_q \equiv 0.$
		We say that the oscillating behavior of the function $c$ is
		\begin{itemize}
			\item very slow if $q=1$ and $\gamma = 0$
			
			\item slow if $q=1$ and $\gamma  \in(0,1)$
			
			\item fast if $q=\gamma =1$
			
			\item very fast if $q>1$ or else $\gamma >1$ when $q=1$.
		\end{itemize} 
	\end{defn}
	\hspace{-0.6cm}When $q=1,$ note that the above definition matches the one in \cite{Reis}. We have redefined very fast oscillation by introducing the case of $q>1.$  Table  \ref{T1} summarizes the loss of regularity results in the case of oscillatory coefficients.
	
		\begin{table}[h!]
		\small\centering
		\begin{tabular}{|cc|c|cc|c|c|}
			\hline
			\multicolumn{2}{|c|}{\begin{tabular}[c]{@{}c@{}}Order of\\ Oscillations\end{tabular}} & \multirow{2}{*}{\begin{tabular}[c]{@{}c@{}}Regularity in $t$\\ of coefficients\\ \end{tabular}} & \multicolumn{2}{c|}{\begin{tabular}[c]{@{}c@{}}Growth in $x$\\ of coefficients\end{tabular}} & \multirow{2}{*}{\begin{tabular}[c]{@{}c@{}}Loss of regularity\\ index for solution\end{tabular}} & \multirow{2}{*}{Ref.} \\ \cline{1-2} \cline{4-5}
			
			\multicolumn{1}{|c|}{$q$}                            & $\gamma$                       &                                                                                                & \multicolumn{1}{c|}{$\omega$}                           & $\Phi$                             &                                                                                                  &                       \\ \hline
			
			\multicolumn{1}{|c|}{1}                              & $[0,1)$                        & $L^{\infty}([0,T])\cap C^2((0,T])$                                            & \multicolumn{1}{c|}{1}                          & 1                          & \begin{tabular}[c]{@{}c@{}}Zero to\\ arbitrarily small\end{tabular} & \cite{Reis}                  \\ \hline
			
			\multicolumn{1}{|c|}{1}                              &$(0,1)$                        &  $C^2((0,T])$                                                                 & \multicolumn{1}{c|}{ $\omega(x)$}                &  $\Phi(x)$                  & \begin{tabular}[c]{@{}c@{}} Zero to\\  arbitrarily small\end{tabular} &  \cite{RU3}                  \\ \hline
			
			\multicolumn{1}{|c|}{1}                              & (0,1]                              & $C([0,T])\cap C^{2}((0,T])$                                     & \multicolumn{1}{c|}{1}                          & 1                          & Finite                                                              & \cite{GG}                  \\ \hline
			
			\multicolumn{1}{|c|}{1}                              & 1                              & $L^{\infty}([0,T])\cap C^{\infty}((0,T])$                                     & \multicolumn{1}{c|}{1}                          & 1                          & Finite                                                              & \cite{KuboReis}                  \\ \hline
			
			\multicolumn{1}{|c|}{1}                              & 1                              & $L^{\infty}([0,T])\cap C^{\infty}((0,T])$                                     & \multicolumn{1}{c|}{$\langle x \rangle$}         & $\langle x \rangle$        & Finite                                                              & \cite{NUKCori1}                  \\ \hline
			
			\multicolumn{1}{|c|}{1}                              &  $[0,\infty)$                   &  $C^2((0,T])$                                                                 & \multicolumn{1}{c|}{ $\omega(x)$}                &  $\Phi(x)$                  & \begin{tabular}[c]{@{}c@{}} Zero to\\  Infinite\end{tabular}        &  \cite{RU3}                \\ \hline
			
			\multicolumn{1}{|c|}{$(1,\infty)$}                   & -                              & $C^1((0,T])$                                                                 & \multicolumn{1}{c|}{1}                          & 1                          & Infinite                                                            & \cite{Cico1}                  \\ \hline
			
			\multicolumn{1}{|c|}{ $\left(1,\frac{3}{2}\right)$}                      & -                              &  $C^1((0,T])$                                                                & \multicolumn{1}{c|}{ $\omega(x)$}                &  $\Phi(x)$                  &  Infinite                                                            &  \cite{RU1}             \\ \hline
		\end{tabular}
		\label{T1}
		\caption{ Loss of regularity in case of oscillatory coefficients.}
	\end{table}
	Following is a scale for the blow-up rate.
	\begin{defn}[Blow-up Rate]\label{BUdefn}
		Let $c=c(t) \in L^{1}((0,T]) \cap C^1((0,T])$ satisfy the estimates
		\begin{equation}\label{BU}
			\begin{rcases}
				\begin{aligned}
					| c(t)| &\lesssim \frac{1}{t^p} |\ln t|^{\gamma {\bf I}_q},\\
					| \partial_t c(t)| &\lesssim \frac{1}{t^q} |\ln t|^{(\gamma-1){\bf I}_q},
				\end{aligned}
			\end{rcases}
		\end{equation}
		with $q \in [1,\infty), p \in [0,1), p \leq q-1.$ The function ${\bf I}_q$ is such that ${\bf I}_q \equiv 1$ if $q=1$ else ${\bf I}_q \equiv 0.$ We say that the blow-up rate of the function $c$ is
		\begin{itemize}
			\item mild if $q = 1, p=0, \gamma \in (0,1)$
			
			\item logarithmic if $q = 1, p=0, \gamma=1$
			
			\item strong if $q = 1, p=0, \gamma \in (1,\infty)$
			
			\item very strong if $q>1, p\in[0,1).$ 
		\end{itemize} 
	\end{defn}	
	Table 2 summarizes the loss of regularity results in the case of coefficients blowing up at $t=0$.
	
	\begin{table}[h!]
		\small\centering
		\begin{tabular}{|ccc|c|cc|c|c|}
			\hline
			\multicolumn{3}{|c|}{\begin{tabular}[c]{@{}c@{}}Rate of\\ Blow-up\end{tabular}}                                         & \multirow{2}{*}{\begin{tabular}[c]{@{}c@{}}Regularity in $t$\\ of coefficients\end{tabular}} & \multicolumn{2}{c|}{\begin{tabular}[c]{@{}c@{}}Growth in $x$\\ of coefficients\end{tabular}} & \multirow{2}{*}{\begin{tabular}[c]{@{}c@{}}Loss of regularity\\ index for solution\end{tabular}} & \multirow{2}{*}{Ref.} \\ \cline{1-3} \cline{5-6}
			
			\multicolumn{1}{|c|}{$p$}                          & \multicolumn{1}{c|}{$q$}                          & $\gamma$ &                                                                                              & \multicolumn{1}{c|}{$\omega$}                            & $\Phi$                            &                                                                                                  &                       \\ \hline
			
			\multicolumn{1}{|c|}{\bf 0}                            & \multicolumn{1}{c|}{\bf 1}                            &  \boldmath$(0,1)$        &  \boldmath$C^1((0,T])$                                                                                 & \multicolumn{1}{c|}{ \boldmath$\omega(x)$}                         &  \boldmath$\Phi(x)$                         & \begin{tabular}[c]{@{}c@{}} \bf Arbitrarily\\ \bf small\end{tabular}                                      & \bf (**)                   \\ \hline
			
			\multicolumn{1}{|c|}{0}                            & \multicolumn{1}{c|}{1}                            & 1              & $C^1((0,T])$                                                                                 & \multicolumn{1}{c|}{1}                                   & 1                                 & Finite                                                                                           & \cite{Cico1}                  \\ \hline
			
			\multicolumn{1}{|c|}{0}                            & \multicolumn{1}{c|}{1}                            & 1              &  $C^1((0,T])$                                                                                 & \multicolumn{1}{c|}{ $\omega(x)$}                         &  $\Phi(x)$                         &  Finite                                                                                           & \cite{RU2}                    \\ \hline
			
			\multicolumn{1}{|c|}{0}                            & \multicolumn{1}{c|}{1}                            &  $(1,\infty)$   &  $C^2((0,T])$                                                                                 & \multicolumn{1}{c|}{ $\omega(x)$}                         &  $\Phi(x)$                         &  Infinite                                                                                         & \cite{RU3}                   \\ \hline
			
			\multicolumn{1}{|c|}{$[0,1)$}                 & \multicolumn{1}{c|}{$(1,\infty)$}                 & -              & $C^1((0,T])$                                                                                 & \multicolumn{1}{c|}{-}                                   & -                                 & Infinite                                                                                         & \cite{CSK}                  \\ \hline
			
			\multicolumn{1}{|c|}{ $\left(0,\frac{1}{2}\right)$} & \multicolumn{1}{c|}{ $\left(1,\frac{3}{2}\right)$} & -              &  $C^1((0,T])$                                                                                 & \multicolumn{1}{c|}{ $\omega(x)$}                         & $\Phi(x)$                         &  Infinite                                                                                         & \cite{RU5}                   \\ \hline
			
		\end{tabular}
		\label{T2}
		\caption{Loss of regularity in case of coefficients blowing-up at $t=0$. (**) refers to the result in this paper.}
	\end{table}

	One can observe from the tables that as $q$ and $\g$ increase so is the loss. In the global setting ($x\in \R^n$ and the coefficients are allowed to grow
	polynomially in $x$), solutions experience loss of both derivatives and decay in a  symmetric fashion. When $q \neq 1,$ we need not consider the logarithmic singularity i.e., the value of $\g$ does not play a major role in deciding the amount of loss. This is due to the fact that $|\ln t|^r \leq t^{-\varepsilon},$ for every $\varepsilon>0$ and $r \in \R.$ The results in first row of Table 2 correspond to the main result of the paper, Theorem \ref{result1}. We show that when the coefficients have a mild blow-up rate as specified by Definition \ref{BUdefn}, one can expect atmost arbitrarily small loss in the regularity index of the solution to (\ref{eq01}) as compared to the Cauchy data.
	
	In this work, we allow the coefficients in (\ref{eq01}) to mildly blow-up near $t=0$ and study the asymptotic behavior of the solutions to the Cauchy problem by considering the metric (discussed in Section \ref{metric})
	\begin{equation}\label{m1}
		g_{\Phi,k} = \P^{-2} \vert dx\vert^2 + \japxik^{-2}\vert d\xi\vert^2,
	\end{equation}
	where  $\japxik=(k^2+|\xi|^2)^{1/2}$ for an appropriately chosen large parameter $k \geq 1.$	
	We report that the solution not only experiences a loss of derivatives but also a decay in relation to the initial datum defined in a Sobolev space (see Section \ref{sb}) tailored to the order of singularity and the metric $g_{\Phi,k}.$
	
	From the energy estimate used in proving the well-posedness we derive an optimal cone condition for the solution of the Cauchy problem (\ref{eq1}) in Section \ref{cone}.  Though the characteristics of the operator $P$ in (\ref{eq1}) are singular in nature, the $L^1$ integrability of sublogarithmic singularity guarantees that the propagation speed is finite. The weight function governing the coefficients also influences the geometry of the slope of the cone in such a manner that the slope grows as $|x|$ grows. Thus, the cone condition in our case is anisotropic in nature.
	
	Our methodology relies upon the conjugation of a first order system corresponding to the operator $P$ in (\ref{eq1}) by a pseudodifferential operator. This operator explains the loss in our context by linking it to the singular behavior and the metric. It is chosen to microlocally compensate the loss of both derivatives and decay. Hence, we call the conjugating operator as \enquote{loss operator}.  In our context, the loss operator is of the form
	\begin{linenomath*}
		\[
		e^{\Theta(t,x,D_x)},
		\]
	\end{linenomath*}
	where $\Theta(t,x,\xi) \in C([0,T];C^\infty(\R^{2n})) \cap C^1((0,T];C^\infty(\R^{2n}))$ is appropriately chosen so that  $\partial_t\Theta \left( \in L^1([0,T];C^\infty(\R^{2n})) \right) $ majorizes the symbols of lower order terms arising after factorization in Section \ref{factr} and satisfies
	\begin{equation}\label{LG}
		\vert \Theta(t,x,\xi)\vert \leq C( \ln(1+\P\japxik))^{\gamma}.
	\end{equation}
	This implies that $	e^{\Theta(t,x,D_x)}$ is a pseudodifferential operator of arbitrary small positive order in both $x$ and $D_x.$ The sublogarithmic order bound in (\ref{LG}) is due to the sublogarithmic singularity of the coefficients as observed in (\ref{logblow}) which is in turn coming from the $O(t^{-1} |\ln t|^{\g-1})$ singularity of their first $t$-derivatives. As the loss of regularity is quantified using the operator $	e^{\Theta(t,x,D_x)},$ the loss is arbitrarily small. We emphasize here that the loss operator gives a scale to measure even the arbitrarily small loss. 
	
	The paper is organized as follows. In Section \ref{tools} we describe the properties of $\Phi,\omega$ and our localization technique.	
	In Section \ref{stmt} we define a Cauchy problem of our interest and state the well-posedness result whose proof will be presented in Section \ref{Proof1}. 
	In Section \ref{Symbol classes}, we define appropriate generalized parameter dependent symbol classes. 
	In Section \ref{cone} we derive a cone condition.

	\section{Tools}\label{tools}
	
	In this section we discuss the main tools of the paper- metrics on the phase space, structure functions $\Phi,\omega$ and a localization technique on the phase space.
	
	\subsection{Our Choice of Metric on the Phase Space}\label{metric}
	
	Let us start by reviewing some notation and terminology used in the study of metrics on the phase space, see \cite[Chapter 2]{Lern} and \cite{nicRodi} for details. Let us denote by $\sigma(X,Y)$ the standard symplectic form on $T^*\R^n\cong \R^{2n}$: if $X=(x,\xi)$ and $Y=(y,\eta)$, then $\sigma$ is given by
	\[
	\sigma(X,Y)=\xi \cdot y - \eta \cdot x.
	\]	
	We can identify $\sigma$ with the isomorphism of $\R^{2n}$ to $\R^{2n}$ such that $\sigma^*=-\sigma$, with the formula $\sigma(X,Y)= \langle \sigma X,Y\rangle$. Consider a Riemannian metric $g_X$ on $\R^{2n}$ (which is a measurable function of $X$) to which we associate the dual metric $g_X^\sigma$ by
		\[ 
		g_X^\sigma(Y)= \sup_{0 \neq Y' \in \R^{2n}} \frac{\langle \sigma Y,Y'\rangle^2}{g_X(Y')}, \quad \text{ for all } Y \in \R^{2n}.
		\]
	
	Considering $g_X$ as a matrix associated to positive definite quadratic form on $\R^{2n}$, $g_X^\sigma=\sigma^*g_X^{-1}\sigma$.
	One defines the Planck function \cite{nicRodi} which plays a crucial role in the development of pseudodifferential calculus as
		\[ 
		h_g(x,\xi) := \sup_{0\neq Y \in \R^{2n}} \Bigg(\frac{g_X(Y)}{g_X^\sigma(Y)}\Bigg)^{1/2}.
		\]
	The uncertainty principle is quantified as the upper bound $h_g(x,\xi)\leq 1$. In the following, we make use of the strong uncertainty principle, that is, for some $\kappa>0$, we have
		\[
		h_g(x,\xi) \leq (1+|x|+|\xi|)^{-\kappa}, \quad (x,\xi)\in \R^{2n}.
		\]
	
	In general, we use the metrics of the form
	\begin{equation}\label{m2}
		g_{\Phi,k}^{\rho,r} = \left( \frac{\japxik^{\rho_2}}{\P^{r_1}} \right)^2 |dx|^2 +  \left( \frac{\P^{r_2}}{\japxik^{\rho_1}} \right)^2 |d\xi|^2.
	\end{equation}
	Here $\rho=(\rho_1,\rho_2)$ , $r=(r_1,r_2)$ for $\rho_j,r_j \in [0,1]$, $j=1,2$ are such that $0 \leq \rho_2<\rho_1 \leq 1$ and $0 \leq r_2<r_1 \leq 1$. 
	The Planck function associated to this metric is  $\P^{r_2-r_1} \japxik^{\rho_2-\rho_1}.$
	
	\subsection{Properties of the Structure Functions $\omega$ and $\Phi$}
	The functions $\o$ and $\P$ are associated with weight and metric respectively. They specify the structure of the differential equation. As pseudodifferential calculus is the datum of the metric satisfying some local and global conditions, in our case it amounts to certain conditions on $\Phi$. The symplectic structure and the uncertainty principle also play a natural role in the constraints imposed on $\Phi$. So we consider $\Phi$ to be a monotone increasing function of $|x|$ satisfying the following conditions: 
		\begin{alignat*}{3}
			1 \; \leq & \quad \Phi(x) &&\lesssim  1+|x| && \quad \text{(sub-linear)} \\
			\vert x-y \vert \; \leq & \quad r\Phi(y) && \implies C^{-1}\Phi(y)\leq \Phi(x) \leq C \Phi(y)  && \quad \text{(slowly varying)} \\
			&\Phi(x+y) && \lesssim  \Phi(x)(1+|y|)^s && \quad \text{(temperate)}
		\end{alignat*}
	for all $x,y\in\R^n$ and for some $r,s,C>0$.
	
	For the sake of calculations arising in the development of symbol calculus related to the metrics $g_{\Phi,k}$, we need to impose following additional conditions:
	\begin{alignat*}{3}
		|\Phi(x) - \Phi(y)| \leq & \Phi(x+y) && \leq \Phi(x) + \Phi(y),  && \quad  (\text{Subadditive})\\
		& |\partial_x^\beta \Phi(x)| && \lesssim \Phi(x) \japx^{-|\beta|}, \\
		&\Phi(ax) &&\leq a\P, \text{ if } a>1,\\
		& a\P &&\leq \Phi(ax), \text{ if } a \in [0,1],
	\end{alignat*}
	where $\beta \in \mathbb{Z}_+^n$. It can be observed that the above conditions are quite natural in the context of symbol classes. In our work, we need even the weight function $\omega$ to satisfy the above stated properties of $\Phi$. In arriving at the energy estimate using the Sharp G\r{a}rding inequality (see Section \ref{energy} for details), we need 
		\[
		\o \lesssim \P, \quad x \in \R^n.
		\]

	\subsection{Sobolev Spaces}\label{sb}
	Let $s=(s_1,s_2) \in \R^2$ and $ k \geq 1.$ We consider the following Sobolev spaces for our work:
	\begin{defn} \label{Sobol}   
		The Sobolev space $\mathcal{H}^{s,\mu,\gamma}_{\Phi,k}(\R^n)$ for $\mu \in \R$ and $\gamma \in (0,1)$ is defined as
		\begin{equation}
			\label{Sobo3}
			\mathcal{H}^{s,\mu,\gamma}_{\Phi,k}(\R^n) = \{v \in L^2(\R^n): \P^{s_2}\langle D \ran^{s_1} e^{\mu(\ln(1+\P\la D_x\rak))^\gamma}v \in L^2(\R^{n}) \},
		\end{equation}
		equipped with the norm
		$
		\Vert v \Vert_{\Phi,k;s,\mu,\gamma} = \Vert \Phi(\cdot)^{s_2}\langle D \ran^{s_1}e^{\mu(\ln(1+\Phi(\cdot)\la D\rak))^\gamma}v \Vert_{L^2} .
		$ 
	\end{defn}
	The subscript $k$ in the notation $\mathcal{H}^{s,\mu,\gamma}_{\Phi,k}(\R^n)$ is related to the parameter in the operator $\la D \ra_k = (k^2 - \Delta_x)^{1/2}.$ Observe that $e^{\mu(\ln(1+\P\la D_x\rak))^\gamma}$ is a pseudodifferential operator of an arbitrarily small positive order in both $x$ and $D_x.$ When $\mu=0,$ the above spaces correspond to the following spaces.
	\begin{defn} \label{Sobo}   
		The Sobolev space $H^{s}_{\Phi,k}(\R^n)$ for $s=(s_1,s_2) \in \R^2$ and $ k \geq 1,$ is defined as
		\begin{equation}
			\label{Sobo2}
			H^{s}_{\Phi,k}(\R^n) = \{v \in L^2(\R^n): \P^{s_2}\langle D \ran^{s_1}v \in L^2(\R^{n}) \},
		\end{equation}
		equipped with the norm
		$
		\Vert v \Vert_{\Phi,k;s} = \Vert \Phi(\cdot)^{s_2}\langle D \ran^{s_1}v \Vert_{L^2} .
		$ 
	\end{defn}
	When $\P$ is bounded and $k=1,$ $H^{s}_{\Phi,1}(\R^n)$ correspond to the usual Sobolev spaces $H^s(\R^n).$
	\begin{rmk}
		For any $\mu \in \R$ and $\gamma \in (0,1),$ we have
		\[
		H^{s+\nu e}_{\Phi,k}(\R^n) \hookrightarrow \mathcal{H}^{s,\mu,\gamma}_{\Phi,k}(\R^n)  \hookrightarrow H^{s-\nu e}_{\Phi,k}(\R^n)
		\]
		where $\nu>0$ is arbitrarily small and $e=(1,1).$
	\end{rmk}

	\subsection{Subdivision of the Phase Space}\label{zones}
	We divide the extended phase space $J=\J,$ where $T>0,$ into two regions using the Planck function $h(x,\xi)=(\P \japxik)^{-1}$ of the metric $g_{\Phi,k}$ in (\ref{m1}). To this end we define $t_{x,\xi}$, for a fixed $(x,\xi)$, as the solution to the equation
	\begin{linenomath*}
		\[
		t=\frac{N}{\P\japxik},
		\]
	\end{linenomath*}
	where $N$ is the positive constant chosen appropriately later. Using $t_{x,\xi}$ we define the interior region
	\begin{linenomath*}
		\begin{equation} \label{zone1}
			\pd =\{(t,x,\xi)\in J : 0 \leq t \leq t_{x,\xi}\}
		\end{equation}
	\end{linenomath*}
	and the exterior region
	\begin{linenomath*}
		\begin{equation} \label{zone2}
			\hyp =\{(t,x,\xi)\in J : t_{x,\xi} < t \leq T\}.
		\end{equation}
	\end{linenomath*}	
	In Section \ref{Symbol classes}, we use these regions to define the parameter dependent global symbol classes.

	\section{Statement of the Main Result}\label{stmt}
	We consider the Cauchy problem:
	\begin{linenomath*}
		\begin{equation}
			\begin{cases}
				\label{eq1}
				P(t,x,\partial_t,D_x)u(t,x)= f(t,x), \qquad D_x = -i\nabla_x,\;(t,x) \in (0,T] \times \R^n, \\
				u(0,x)=f_1(x), \quad \partial_tu(0,x)=f_2(x),
			\end{cases}
		\end{equation}
	\end{linenomath*}
	with the strictly hyperbolic operator $P(t,x,\partial_{t},D_{x}) = \partial_t^2 + a(t,x,D_x)+ b(t,x,D_x)$ where
	\begin{linenomath*}			
		\[
		a(t,x,\xi)  = \sum_{i,j=1}^{n} a_{i,j}(t,x)\xi_i\xi_j \quad \text{ and } \quad
		b(t,x,\xi)  = i\sum_{j=1}^{n} b_{j}(t,x)\xi_j + b_{n+1}(t,x).
		\]
	\end{linenomath*}
	Here, the matrix $(a_{i,j}(t,x))$ is real symmetric for all $(t,x)\in (0,T] \times \R^n$, $a_{i,j} \in C^1((0,T];C^\infty(\R^n))$ and $b_j \in C([0,T];C^\infty(\R^n))$. Similar to the estimates in Remark \ref{rmk}, we have the following assumptions on $a(t,x,\xi)$ and $b(t,x,\xi)$
	\begin{linenomath*}
		\begin{equation}
			\label{conds}
			\begin{rcases}
				\begin{aligned}
					a(t,x,\xi) &\geq C_0 \o^2 \japxik^2, \quad C_0>0, \\
					\vert \partial_\xi^\alpha \partial_x^\beta a(t,x,\xi) \vert &\leq C_{\alpha\beta} \frac{1}{t^{\delta_1}} \o^2 \P^{-\vert \beta \vert}\japxik^{2-\vert \alpha \vert},\quad |\alpha| \geq 0, |\beta|>0, \\
					\vert \partial_\xi^\alpha \partial_x^\beta b(t,x,\xi) \vert &\leq C_{\alpha\beta} \o \P^{-\vert \beta \vert}\japxik^{1-\vert \alpha \vert},
				\end{aligned}
			\end{rcases}
		\end{equation}
	\end{linenomath*}	
\begin{linenomath*}
	\begin{align}
		\label{bl1}
		\vert \partial_\xi^\alpha \partial_x^\beta \partial_t a(t,x,\xi) \vert & \leq C_{\alpha\beta} \frac{(\ln(1+1/t))^{\gamma-1}}{t^{1+\delta_2|\beta|}} \o^2 \P^{-\vert \beta \vert} \japxik^{2-\vert \alpha \vert},\\
		\label{bl2}
		\vert \partial_\xi^\alpha \partial_x^\beta \partial_t a(t,x,\xi) \vert  & \leq C_{\alpha\beta} \frac{1}{t^{\delta_3+\delta_2|\beta|}} \o^2 \P^{-\vert \beta \vert} \japxik^{2-\vert \alpha \vert}\tag{\ref{bl1}${}^*$},
	\end{align}	
	\end{linenomath*}
	where $\delta_1,\delta_2,\delta_3 \in \big[0,1\big), (t,x,\xi) \in [0,T] \times \R^n\times \R^n$. Note that $C_{\alpha\beta}$ is a generic positive constant which may differ at each occurence.
	
	We now state the main result of this paper.
	Let $e=(1,1).$ 	
	\begin{thm}(Arbitrary small loss/ no loss)\label{result1}
		Consider the strictly hyperbolic Cauchy problem  (\ref{eq1}) satisfying the conditions (\ref{conds}) and (\ref{bl1} or \ref{bl2}). Let the initial data $f_j$ belong to $H^{s+(2-j)e}_{\Phi,k}$, $j=1,2$ and the right hand side $f \in C([0,T];H^{s}_{\Phi,k})$.
		Then, denoting $\delta=\max\{\delta_1,\delta_2\}$, for every $\varepsilon \in (0,1-\delta)$ there are $\kappa_0,\kappa_1 >0$ such that for every $s \in \R^2$ there exists a unique global solution
		\begin{linenomath*}
			\[
			u \in C\left([0,T];\mathcal{H}^{s+e,-\Lambda(t),\gamma}_{\Phi,k}\right)\bigcap C^{1}\left([0,T];\mathcal{H}^{s,-\Lambda(t),\gamma}_{\Phi,k}\right),
			\]
		\end{linenomath*}
		where 
		\[
		\Lambda(t) = 
		\begin{cases}
			\begin{aligned}
				&\kappa_0 + \kappa_1 t^\varepsilon/\varepsilon,  \qquad \text{ when (\ref{bl1}) is satisfied,} \\
				&0,   \qquad \qquad \qquad \text{ when (\ref{bl2}) is satisfied.}
			\end{aligned}
		\end{cases}
		\]
		More specifically, the solution satisfies an a priori estimate		
		\begin{linenomath*}
			\begin{equation}
				\begin{aligned}
					\label{est2}
					\sum_{j=0}^{1} \Vert \partial_t^ju(t,\cdot) &\Vert_{\Phi,k;s+(1-j)e,-\Lambda(t),\gamma} \\
					&\leq C \Bigg(\sum_{j=1}^{2} \Vert f_j\Vert_{\Phi,k;s+(2-j)e} 
					+ \int_{0}^{t}\Vert f(\tau,\cdot)\Vert_{\Phi,k;s,-\Lambda(\tau),\gamma}\;d\tau\Bigg)
				\end{aligned}
			\end{equation}
		\end{linenomath*}
		for $0 \leq t \leq T, \; C=C_s>0$.				
	\end{thm}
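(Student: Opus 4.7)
The plan is to follow the standard reduction-conjugation-energy framework for strictly hyperbolic Cauchy problems with low-regularity coefficients, adapted here to the weighted metric $g_{\Phi,k}$ and to the sublogarithmic singularity at $t=0$. I would first approximate the coefficients by smooth ones (e.g.\ by convolution in $t$ with a mollifier at a small scale), prove uniform a priori estimates for the regularized problem, and pass to the limit. Rewriting (\ref{eq1}) as a first-order pseudodifferential system for $U = (\japxik u, \partial_t u)^\top$ one obtains
\[
\partial_t U - K(t,x,D_x) U = F,
\]
where the $2\times 2$ matrix symbol $K$ has principal part of order $1$ in $\xi$ controlled by (\ref{conds}), and $\partial_t K$ controlled by (\ref{bl1}) or (\ref{bl2}).

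Using the zone decomposition of Section \ref{zones}, with parameter $N$ chosen large, I would proceed differently in the two regions. In the interior zone $\pd$, where $t \leq N/(\P\japxik)$, direct estimates exploiting the Planck function $\h = (\P\japxik)^{-1}$ suffice; in the exterior zone $\hyp$, I would microlocally diagonalize $K$ (Section \ref{factr}) into $K = \Lambda_D + R$, with $\Lambda_D$ diagonal carrying the roots $\pm\sqrt{a/\japxik^2}\,\japxik$ and $R$ a remainder. The parameter-dependent calculus of Section \ref{Symbol classes} shows that each commutator step gains a factor $\h$, so that modulo symbols already integrable in $t$, $R$ is majorized by a symbol of order $(1,1)$ in the sense of $g_{\Phi,k}$, multiplied by $t^{-1}|\ln t|^{\gamma-1}$ in case (\ref{bl1}) and by $t^{-\delta_3}$ in case (\ref{bl2}).

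The core step is the introduction of the loss operator $e^{\Theta(t,x,D_x)}$ with
\[
\Theta(t,x,\xi) = -\Lambda(t)\bigl(\ln(1+\P\japxik)\bigr)^\gamma,
\]
where $\Lambda(t) = \kappa_0 + \kappa_1 t^\varepsilon/\varepsilon$ is tuned so that, exploiting the zonal structure, the time-integrated singular factor $\int_{t_{x,\xi}}^{t} s^{-1}|\ln s|^{\gamma-1}\,ds \lesssim |\ln t_{x,\xi}|^\gamma \lesssim (\ln(\P\japxik))^\gamma$ is precisely absorbed by $-\Theta$. The crucial quantitative ingredient is
\[
\int_0^T \frac{|\ln t|^{\gamma-1}}{t}\,dt = \frac{|\ln T|^\gamma}{\gamma} < \infty \qquad \text{for } \gamma \in (0,1),
\]
which together with matching at $t = t_{x,\xi}$ guarantees that the conjugated system for $V = e^\Theta U$ has symbol estimates in which the dangerous part of $R$ is turned into an $L^1$-in-time perturbation. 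A standard $L^2$ energy identity with a symmetrizer adapted to $\Lambda_D$, combined with the Sharp G\r{a}rding inequality (for which the constraint $\o \lesssim \P$ recorded in Section \ref{metric} is essential to keep the error bounded), yields
\[
\tfrac{d}{dt}\|V(t,\cdot)\|_{L^2}^2 \leq c(t)\|V(t,\cdot)\|_{L^2}^2 + \|e^\Theta F(t,\cdot)\|_{L^2}^2, \qquad c \in L^1([0,T]),
\]
and Gronwall closes the estimate. Reverting via $e^{-\Theta}$ translates $L^2$ control of $V$ into control of $u$ in the tailored Sobolev norm $\mathcal{H}^{s+(1-j)e,-\Lambda(t),\gamma}_{\Phi,k}$, yielding (\ref{est2}). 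In case (\ref{bl2}) the time singularity $t^{-\delta_3}$ is already $L^1$ for $\delta_3 < 1$, so the loss operator can be taken trivial ($\Lambda \equiv 0$) and no loss of regularity occurs.

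The main obstacle I anticipate is the symbolic calculus bookkeeping: one must verify that $\Theta$, its exponential $e^\Theta$, and the remainder $R$ fit into compatible parameter-dependent symbol classes of the $\GH$ type, with $\rho, r$ tuned to the singularity indices $\delta_1, \delta_2$ (as foreshadowed by the macros $\wmt, \wmtt$ in the preamble), so that composition, commutator, and especially the Sharp G\r{a}rding inequality apply uniformly in $t \in (0,T]$. A secondary technical difficulty is matching the interior- and exterior-zone analyses at the interface $t = t_{x,\xi}$ without introducing uncontrolled remainders; this is typically achieved by leveraging the freedom in $N$ (and in $k$). Uniqueness then follows by applying (\ref{est2}) to the difference of two solutions with vanishing data and forcing.
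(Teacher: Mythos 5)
Your overall framework (zone decomposition, a loss operator of the form $e^{-\Lambda(t)(\ln(1+\P\japxik))^\gamma}$, Sharp G\r{a}rding under the constraint $\o\lesssim\P$, Gronwall) matches the paper's, but there is a genuine error in what you single out as the ``crucial quantitative ingredient.'' The integral $\int_0^T t^{-1}|\ln t|^{\gamma-1}\,dt$ is in fact \emph{divergent} for $\gamma\in(0,1)$: the substitution $u=-\ln t$ turns it into $\int_{|\ln T|}^{\infty}u^{\gamma-1}\,du$, which diverges because $\gamma>0$. The value $|\ln T|^\gamma/\gamma$ that you quote is $\int_T^1 t^{-1}|\ln t|^{\gamma-1}\,dt$, not $\int_0^T$. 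This is not a cosmetic slip: the non-integrability of $t^{-1}|\ln t|^{\gamma-1}$ near $t=0$ is exactly the obstruction that forces the loss operator to appear. What saves the day --- as you correctly write one sentence earlier --- is that the singular weight is only integrated over the exterior zone $t\geq t_{x,\xi}=N/(\P\japxik)$, giving $\int_{t_{x,\xi}}^T s^{-1}|\ln s|^{\gamma-1}\,ds\lesssim(\ln(1+\P\japxik))^\gamma$; this $(x,\xi)$-dependent growth, rather than an $L^1([0,T])$ bound, is what $e^{\Theta}$ must absorb. As written, your proposal contains both the correct mechanism and a false claim that would, if true, make that mechanism unnecessary; you need to delete the false one and let the zone-restricted bound carry the argument.

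Structurally, the paper does not mollify the coefficients in $t$ and pass to a limit; instead it performs a phase-space excision, replacing $a$ by $\tilde a=\varphi(t\P\japxik)\o^{2}\japxik^{2}+(1-\varphi(t\P\japxik))a$ so that $\tilde a$ is regular at $t=0$, and then factorizes $P=(\partial_t-i\tau)(\partial_t+i\tau)+(a-\tilde a)+a_1$ with $\tau=\sqrt{\tilde a}$. The first-order system is built on $u_1=(\partial_t+i\tau)u$, $u_2=\o\la D_x\rak u - Hu_1$, so the principal part $\mathcal D$ is diagonal by construction and no separate $2\times2$ diagonalization of a symbol $K$ in the exterior zone is needed. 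Note that mollifying at a fixed $t$-scale would not preserve (\ref{conds})--(\ref{bl1}) uniformly; the scale must be taken $\sim 1/(\P\japxik)$, at which point you have reinvented the excision. Finally, the paper's conjugation is done in two stages, $V_1=e^{-\int_0^t\psi(r,x,D_x)\,dr}U$ followed by $V_2=e^{-\mu(t)(\ln(1+\P\la D_x\rak))^{\gamma}}V_1$ with $\mu(t)=\kappa_1 t^{\varepsilon}/\varepsilon$; the composite loss is what your single $e^{\Theta}$ with $\Lambda(t)=\kappa_0+\kappa_1 t^{\varepsilon}/\varepsilon$ accounts for, but the two-step form is what keeps the composition and remainder estimates (via \cite[Lemma A.7]{RU2}) tractable.
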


	\section{Parameter Dependent Global Symbol Classes}	\label{Symbol classes}
	We now define certain parameter dependent global symbols that are associated with the study of the Cauchy problem (\ref{eq1}). Let $m=(m_1,m_2)\in \mathbb{R}^2$. Consider the metric $g_{\Phi,k}^{\rho,r}$ as in (\ref{m2}).
	
	\begin{defn}
		$G^{m_1,m_2}(\omega,g_{\Phi,k}^{\rho,r})$ is the space of all functions $p=p(x,\xi) \in C^\infty(\mathbb{R}^{2n})$ satisfying 
		\begin{linenomath*}
			\begin{equation*}
				\label{sym1}
				\sup_{\alpha,\beta \in \mathbb{N}^n} \sup_{(x,\xi)\in \R^n}  \japxik^{-m_1+\rho_1|\alpha|-\rho_2|\beta|} \o^{-m_2} \P^{r_1|\beta|-r_2|\alpha|} |\partial_\xi^\alpha  D_x^\beta p(x,\xi)| < +\infty
			\end{equation*}	
		\end{linenomath*}		
	\end{defn}
	We denote the metric $g^{(1,0),(1,0)}_{\Phi,k}$ and the corresponding symbol class $G^{m_1,m_2}(\omega,g^{(1,0),(1,0)}_{\Phi,k})$ as $g_{\Phi,k}$ and $G^{m_1,m_2}(\omega,g_{\Phi,k})$, respectively.
	
	To handle the stronger singular behavior of the characteristics of the operator $P$ in (\ref{eq1}), we have the following symbol classes. Denote
	\begin{equation}\label{theta}
		\theta(t) = \left( \ln \left( 1+\frac{1}{t}\right)\right)^\gamma.
	\end{equation}
	\begin{defn}
		$G^{m_1,m_2}\{l_1,l_2;\g,p\}_{int,N}(\omega,g_{\Phi,k})$ for $l_1,l_2\in \R$ and $p\in[0,1)$is the space of all $t$-dependent symbols $a=a(t,x,\xi)$ in $C^1((0,T];G^{m_1,m_2}(\omega,g_{\Phi,k}))$ satisfying 
		\begin{linenomath*}
			\begin{align*}
				\vert  \partial_\xi^\alpha a(t,x,\xi) \vert & \leq C_{00} \japxik^{m_1-|\alpha|} \o^{m_2}\theta(t)^{l_1},\\
				\vert \partial_\xi^\alpha D_x^\beta a(t,x,\xi) \vert  &\leq C_{\alpha \beta} \japxik^{m_1-|\alpha|} \o^{m_2}\P^{-|\beta|} \left(\frac{1}{t}\right) ^{pl_2} ,
			\end{align*}
		\end{linenomath*}
		for all $(t,x,\xi)\in \pd $ and for some $C_{\alpha \beta}>0$ where $\alpha\in \mathbb{N}^n_0$ and $\beta \in \mathbb{N}^n.$ 
	\end{defn}	
	
	\begin{defn}
		$G^{m_1,m_2}\{l_1,l_2,l_3,l_4;\g,p\}_{ext,N}(\omega,g_{\Phi,k})$ for $l_j\in \R,j=1,\dots,4$ and $p\in[0,1)$ is the space of all $t$-dependent symbols $a=a(t,x,\xi)$ in $C^1((0,T]; G^{m_1,m_2}(\omega,g_{\Phi,k}))$ satisfying 
		\begin{linenomath*}
			\begin{align*}
				\vert &\partial_\xi^\alpha D_x^\beta a(t,x,\xi) \vert \\
				& \leq C_{\alpha \beta} \japxik^{m_1-|\alpha|} \o^{m_2}\P^{-|\beta|} \bigg(\frac{1}{t}\bigg)^{l_1+p(l_2+|\beta|)}   \theta(t)^{l_3+l_4(|\alpha|+|\beta|)} 
			\end{align*}
		\end{linenomath*}
		for all $(t,x,\xi)\in \hyp$ and for some $C_{\alpha \beta}>0$ where $\alpha,\beta \in \mathbb{N}^n_0.$ 
	\end{defn}	
	Given a $t$-dependent global symbol $a(t,x,\xi)$, we can associate a pseudodifferential operator $Op(a)=a(t,x,D_x)$ to $a(t,x,\xi)$ by the following oscillatory integral
	\begin{linenomath*}
		\begin{align*}
			a(t,x,D_x)u(t,x)& =\iint\limits_{\mathbb{R}^{2n}}e^{i(x-y)\cdot\xi}a(t,x,\xi){u}(t,y)dy \textit{\dj}\xi \\
			& = (2\pi)^{-n}\int\limits_{\mathbb{R}^n}e^{ix\cdot\xi}a(t,x,\xi)\hat{u}(t,\xi) \textit{\dj}\xi,
		\end{align*}
	\end{linenomath*}
	where $\textit{\dj}\xi = (2 \pi)^{-n}d\xi$ and $\hat u$ is the Fourier transform of $u$ in the space variable.
	
	We denote the class of operators with symbols in  $G^{m_1,m_2}(\omega,g_{\Phi,k}^{\rho,r})$ by  $OPG^{m_1,m_2}(\omega,g_{\Phi,k}^{\rho,r})$. We refer to \cite[Section 1.2 \& 3.1]{nicRodi} and \cite{CT} for the calculus of such operators. The calculus for the operators with symbols of form $a(t,x,\xi) = a_1(t,x,\xi) + a_2(t,x,\xi)$ such that
	\begin{linenomath*}
		\[
		\begin{aligned}
			a_1 &\in G^{\tilde m_1,\tilde m_2}\{\tilde l_1,\tilde l_2;\g,\delta_1\}_{int,N_1} \wm, \\
			a_2 &\in G^{m_1,m_2}\{l_1,l_2,l_3,l_4;\g,\delta_2\}_{ext,N_2}\wm,
		\end{aligned}
		\]
	\end{linenomath*}
	for $N_1 \geq N_2,$ follows in similar lines to the one in \cite[Appendix]{RU2}.

	\section{Proofs} \label{Proof1}
	We first give the proof of Theorem \ref{result1} when (\ref{bl1}) is satisfied and the case for (\ref{bl2}) follows in similar lines.  The proof in our context follows in similar lines to the one given in \cite[Section 5]{RU2}.
	
	There are three key steps in the proof of Theorem \ref{result1}. First, we factorize the operator $P(t,x,\partial_t,D_x)$. To this end, we begin with modifying the coefficients of the principal part by performing an excision so that the resulting coefficients are regular at $t=0$. Second, we reduce the original Cauchy problem to a Cauchy problem for a first order system (with respect to $\partial_t$). Lastly, using sharp G\r{a}rding’s inequality we arrive at the $L^2$ well-posedness of a related auxiliary Cauchy problem, which gives well-posedness of the original problem in the Sobolev spaces $\mathcal{H}^{s,\mu,\gamma}_{\Phi,k}(\R^n)$.
	
	\subsection{Factorization}\label{factr}
	From the estimate (\ref{bl1}), we observe that $a(t,x,\xi)$ is sublogarithmically bounded near $t=0$, i.e.,
	\begin{linenomath*}
		\begin{equation}
			\label{log}
			|a(t,x,\xi)| \leq C  \o^{2} \japxik^2 \theta(t), \quad C>0.
		\end{equation}
	\end{linenomath*}
	We modify the symbol $a$ in $Z_{int}(2)$, by defining
	\begin{linenomath*}
		\begin{equation}\label{exci}
			\tilde{a}(t,x,\xi)=\varphi(t\P \japxik)\o^{2} \japxik^{2} + (1-\varphi(t\Phi(x) \japxik))a(t,x,\xi)
		\end{equation}
	\end{linenomath*} 
	for $
	\varphi \in C^\infty(\mathbb{R}) \text{ , }0\leq \varphi \leq 1 \text{ , } \varphi=1 \text{ in }[0,1] \text{ , }\varphi=0 \text{ in }[2,+\infty).$
	Note that 
	$$(a-\tilde{a}) \in G^{2,2}\{1,1;\g,\delta_1\}_{int,2}(\omega,g_{\Phi,k}) \text{ and } (a-\tilde{a}) \sim 0 \text{ in } Z_{ext}(2).$$ This
	implies that $t^{\delta_1}(a-\tilde{a})$ for $ t \in [0,T]$ is a bounded and continuous family in $G^{2,2}(\omega,g_{\Phi,k})$. 
	Observe that $a-\tilde{a}$ is $L^1$ integrable in $t$, i.e.,
	\begin{equation}\label{diff}
		\begin{aligned}
			\int^{T}_{0}\vert (a-\tilde{a})(t,x,\xi)\vert dt 
			&\leq \kappa_{0}  \o^{2} \japxik^{2} \left|\int_{0}^{2/\Phi(x) 	\japxik}\theta(t)dt\right|	\\
			&\leq \kappa_{0} \o \japxik (\ln(1+\Phi(x) \japxik))^\gamma.
		\end{aligned}
	\end{equation}
	
	Let $\tau(t,x,\xi)= \sqrt{\tilde{a}(t,x,\xi)}$ and $\delta=\max\{\delta_1,\delta_2\}$. It is easy to note that 
	\begin{enumerate}[label=\roman*)]
		\item $\tau(t,x,\xi)$ is $G_\omega$-elliptic symbol of order $(1,1)$ i.e. there is $C>0$ such that for all $(t,x,\xi)\in [0,T] \times \mathbb{R}^n \times \mathbb{R}^n$ we have
		\begin{linenomath*}
			\[
			\vert\tau(t,x,\xi)\vert \geq C  \o \japxik.
			\]
		\end{linenomath*}
		\item $\tau(t,x,\xi) \in G^{1,1}\{0,0;\g,0\}_{int,2}(\omega,g_{\Phi,k}) + G^{1,1}\{0,0,1,1;\g,\delta_1\}_{ext,1}(\omega,g_{\Phi,k}) $.
		
		\item By definition
		\[
		\partial_t \tau(t,x,\xi) = \frac{1}{2\tau} \left[  \P\japxik\varphi'(t\P\japxik) (\o^2\japxik^2-a) + (1-\varphi(t\P\japxik)) \partial_ta \right].
		\]
		Hence,
		\begin{alignat}{3}
				\label{tau1}
				&\partial_t \tau &&\sim 0 && \quad  \text{ in } Z_{int}(1), \\
				\label{tau2}
				&\partial_t \tau && \in G^{2,2}\{1,1;\gamma,\delta_1\}_{int,2}(\Phi,g_{\Phi,k})&& \quad  \text{ in } Z_{int}(2)\setminus Z_{int}(1),\\
				\label{tau3}
				&\partial_t \tau &&\in G^{1,1}\{1,0,1,1;\g,\delta\}_{ext,2}(\omega,g_{\Phi,k}) &&  \quad \text{ in } Z_{ext}(2).
		\end{alignat}
		To be precise,  there are $C_0,C_{\alpha \beta}>0$ such that for $(t,x,\xi)\in [0,T] \times \mathbb{R}^n \times \mathbb{R}^n$ and $|\alpha|\geq0,|\beta|>0$ we have
		\begin{linenomath*}
			\begin{equation}
				\begin{rcases}
				\label{tau4}
				\begin{aligned}
					|\partial_\xi^\alpha D_x^\beta \partial_t \tau(t,x,\xi)| &\sim \chi_{int}(1) \; 0 \\
					| \partial_t \tau(t,x,\xi)| &\leq C_0\left( \chi_{int}(2)- \chi_{int}(1) \right) \P^2 \japxik^2 \; \t \\
					|\partial_\xi^\alpha D_x^\beta \partial_t \tau(t,x,\xi)| & \leq C_{\alpha \beta} \left( \chi_{int}(2)- \chi_{int}(1) \right) \P^2 \japxik^2 \; \frac{1}{t^{\delta_1}} \\
					{|\partial_t\tau(t,x,\xi)|}  & \leq C_{0} \; \chi_{ext}(1)\japxik \o  \frac{\t}{t},\\
					{|\partial_\xi^\alpha D_x^\beta \partial_t \tau(t,x,\xi)|} & \leq C_{\alpha \beta} \; \chi_{ext}(1)\japxik^{1-|\alpha|} \o \P^{-|\beta|}  \frac{\t^{1-\frac{1}{\g}}}{t} \frac{\t^{|\alpha|+|\beta|}}{t^{\delta|\beta|}}.
				\end{aligned}
			\end{rcases}
			\end{equation}
		\end{linenomath*}
	\end{enumerate}  
	Here $\chi_{int}(N_1)$ and  $\chi_{ext}(N_2)$ are the indicator functions for the regions $Z_{int}(N_1)$ and $Z_{ext}(N_2)$, respectively.
	From the properties (i-iii) of $\tau$ and by the definition of $\tilde a$ in (\ref{exci}), we have the following two lemmas.
	\begin{lem}
		Let $\varepsilon,\varepsilon'$ be such that $0<\varepsilon< \varepsilon'<1-\delta$. Then,
		\begin{enumerate}[label=\roman*)]
			\item $\tau \in C([0,T]; G^{1+\varepsilon,1}(\omega\Phi^\varepsilon,g^{(1,\delta_1),(1-\delta_1,0)}_{\Phi,k})$, 
			\item $\tau^{-1} \in C([0,T]; G^{-1,-1}(\omega,g^{(1,\delta_1),(1-\delta_1,0)}_{\Phi,k})$,
			\item $t^{1-\varepsilon}\partial_t \tau(t,\cdot,\cdot) \in G^{1+\varepsilon',1+\varepsilon'}(\Phi,g_{\Phi,k}^{(1,\delta),(1-\delta,0)})$, for all $t \in [0,T]$.
		\end{enumerate}	
	\end{lem}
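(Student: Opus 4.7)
The common engine is to reduce every symbol bound to the two zones $\pd$ and $\hyp$, and to trade singular time factors for polynomial growth in $(x,\xi)$ via the defining zone inequalities: on $Z_{ext}(N)$ one has $t \geq N/(\P\japxik)$, so $t^{-\alpha} \lesssim (\P\japxik)^\alpha$ for $\alpha\geq 0$ and, crucially, the sublogarithmic factor obeys $\t \leq C_\eta (\P\japxik)^\eta$ for every $\eta>0$; on $Z_{int}(N)$, conversely, $\P\japxik \leq N/t$. This conversion, combined with the ellipticity of $\tilde a$ and Fa\`a di Bruno applied to $\tau = \sqrt{\tilde a}$ and to $\tau^{-1}$, drives all three items. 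Continuity in $t$ at $t=0$ follows for free, since $\varphi(0)=1$ and the definition (\ref{exci}) gives $\tilde a(0,x,\xi) = \o^2\japxik^2$ inside the symbol class.

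\textbf{Items (i) and (ii).} For (i), I would first use $\tilde a \geq C_0 \o^2\japxik^2$ (built into the excision on $\pd$ and inherited from (\ref{conds}) on $\hyp$) to obtain the pointwise upper bound $\tau \lesssim \o\japxik\sqrt{1+\t}$; on $\hyp$ the $\sqrt{\t}$ is absorbed into $\P^{\varepsilon/2}\japxik^{\varepsilon/2}$, yielding $|\tau| \lesssim \o\P^\varepsilon\japxik^{1+\varepsilon}$. Derivative bounds follow from Fa\`a di Bruno for $\sqrt{\tilde a}$: on $\pd$, the $x$- and $\xi$-derivatives of $\tilde a$ sit in $G^{2,2}(\o,g_{\Phi,k})$; on $\hyp$, the extra $t^{-\delta_1}$ from $x$-derivatives of $a$ becomes $(\P\japxik)^{\delta_1}$ and matches precisely the per-derivative gain $\P^{-(1-\delta_1)}\japxik^{\delta_1}$ of the metric $g^{(1,\delta_1),(1-\delta_1,0)}_{\Phi,k}$, while $\xi$-derivatives produce the expected $\japxik^{-1}$. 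For (ii), ellipticity yields $|\tau^{-1}| \lesssim \o^{-1}\japxik^{-1}$, and Fa\`a di Bruno expresses $\partial_\xi^\alpha D_x^\beta \tau^{-1}$ as a finite sum of terms $\tau^{-(N+1)}\prod_j \partial_\xi^{\alpha_j} D_x^{\beta_j}\tau$; order counting using (i) shows that the combined contribution fits $G^{-1,-1}(\o,g^{(1,\delta_1),(1-\delta_1,0)}_{\Phi,k})$, with the surplus factor of $\tau^{-1}$ cancelling the $\P^\varepsilon$ weights coming from (i).

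\textbf{Item (iii).} I would split according to (\ref{tau4}). On $Z_{int}(1)$ the statement is vacuous since $\partial_t\tau\equiv 0$. On $Z_{int}(2)\setminus Z_{int}(1)$, where $\P\japxik \sim 1/t$, the bounds $|\partial_\xi^\alpha D_x^\beta\partial_t\tau| \lesssim \P^2\japxik^2\,t^{-\delta_1|\beta|}$ (plus the zero-derivative case with $\t$, handled by $\t \lesssim (\P\japxik)^{\varepsilon'-\varepsilon}$) multiplied by the prefactor $t^{1-\varepsilon}$ and combined with $t^{-\delta_1}\lesssim(\P\japxik)^{\delta_1}$ produce an upper bound compatible with $G^{1+\varepsilon',1+\varepsilon'}(\P,g^{(1,\delta),(1-\delta,0)}_{\Phi,k})$, after redistributing the per-derivative gains $\P^{-(1-\delta)}\japxik^{\delta}$ and $\japxik^{-1}$ prescribed by the metric. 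On $\hyp$, the last two lines of (\ref{tau4}) carry the factor $t^{-1}\,\t^{1-1/\g}\,\t^{|\alpha|+|\beta|}\,t^{-\delta|\beta|}$; multiplication by $t^{1-\varepsilon}$ leaves $t^{-\varepsilon}\,t^{-\delta|\beta|}\,\t^{1-1/\g+|\alpha|+|\beta|}$, and each of these factors is converted into a polynomial in $\P\japxik$ by the exterior-zone inequalities (noting that $\t^{1-1/\g}\leq 1$ since $1-1/\g<0$ and $\t\geq C>0$ away from $t=0$ in the relevant range).

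\textbf{Main obstacle.} The delicate point in (iii) is that the exponent of $\t$ grows with $|\alpha|+|\beta|$, whereas the target symbol class prescribes a uniform growth $\P^{1+\varepsilon'}\japxik^{1+\varepsilon'}$. This is reconciled by the sublogarithmic bound $\t(t)^M \leq C_{\eta,M}(\P\japxik)^\eta$ on $\hyp$, valid for every $\eta>0$ and every $M\in\mathbb{N}$, so any fixed finite power of $\t$ is absorbed into a single polynomial gain $(\P\japxik)^{\varepsilon'-\varepsilon}$ at the cost of an $(\alpha,\beta)$-dependent constant, which is admissible in the symbol seminorms. This is precisely why the statement needs two distinct exponents $\varepsilon<\varepsilon'<1-\delta$: $\varepsilon$ absorbs the zero-order sublogarithmic factor appearing in (i) and (ii), while the slack $\varepsilon'-\varepsilon$ is reserved for dominating the growing powers of $\t$ arising upon differentiating $\partial_t\tau$ in (iii).
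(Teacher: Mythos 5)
Your proposal is correct and follows essentially the same route as the paper: the paper's proof of (iii) is exactly your central observation that on $Z_{ext}(1)$ one has $t^{1-\varepsilon}\bigl(t^{-1}\theta(t)^{1+|\alpha|+|\beta|}\bigr)\leq t^{-\varepsilon'}\leq(\Phi(x)\langle\xi\rangle_k)^{\varepsilon'}$ (your ``main obstacle'' paragraph), and for (i)--(ii) the paper simply cites Proposition~A.1 of \cite{RU2}, whose content is the Fa\`a di Bruno/ellipticity argument you sketch. Two small inaccuracies that don't break the argument: the $Z_{int}(2)\setminus Z_{int}(1)$ bound in (\ref{tau4}) carries $t^{-\delta_1}$, not $t^{-\delta_1|\beta|}$; and the absorption of the $\theta$-powers requires $(\alpha,\beta)$-dependent constants, which you do explicitly acknowledge but the paper leaves implicit.
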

	\begin{proof}
		The first two claims follow from \cite[Prposition A.1]{RU2} while the third from the observation that $t^{1-\varepsilon} \left(  \frac{1}{t} \t^{1+|\alpha|+|\beta|}  \right) \leq \frac{1}{t^{\varepsilon'}} \leq \big(\P\japxik \big)^{\varepsilon'}$ in $Z_{ext}(1).$
	\end{proof}
	
	\begin{lem}
		Let $\varepsilon$ be such that $0<\varepsilon<1-\delta$. Then,
		\begin{enumerate}[label=\roman*)]
			\item $t^{1-\varepsilon}(\tilde{a}(t,x,D_x) -\tau(t,x,D_x) ^2) \in C\Big([0,T];OPG^{1,1}(\omega,g_{\Phi,k}^{(1,\delta_1),(1-\delta_1,0)})\Big) $,
			\item $t^{1-\varepsilon} (a(t,x,D_x)-\tilde a(t,x,D_x)) \in C\Big([0,T];OPG^{1,1}(\omega,g_{\phi,k})\Big).$
		\end{enumerate}
	\end{lem}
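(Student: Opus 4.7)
The plan is to prove (ii) first by a direct estimate, then (i) using the pseudodifferential symbol calculus for the refined metric.

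For (ii), the key identity is $a - \tilde a = \varphi(t\P\japxik)(a - \o^2\japxik^2)$, which follows directly from the definition (\ref{exci}) of $\tilde a$ and shows that the difference is supported in $Z_{int}(2)$. In this region, the symbol estimates for $a-\tilde a$ inherited from $(a-\tilde a)\in G^{2,2}\{1,1;\g,\delta_1\}_{int,2}(\o,g_{\Phi,k})$ give $|a-\tilde a|\lesssim \o^2\japxik^2\t$ and, for $|\beta|\geq 1$, $|\partial_\xi^\alpha D_x^\beta(a-\tilde a)|\lesssim \japxik^{2-|\alpha|}\o^2\P^{-|\beta|}t^{-\delta_1}$. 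The localization $t\P\japxik\leq 2$ in $Z_{int}(2)$, combined with $\o\leq\P$, yields $t\o\japxik\leq 2$, so that one factor of $\o\japxik$ is converted into a gain of $t^{-1}$. Multiplying by the $t^{1-\varepsilon}$ prefactor then absorbs the residual sublogarithmic factor $\t$ via the elementary limit $t^\alpha(\ln(1+1/t))^\g\to 0$ as $t\to 0^+$ for any $\alpha>0$, and for derivative estimates with $|\beta|\geq 1$, the $t^{-\delta_1}$ factor is absorbed by $t^{1-\varepsilon-\delta_1}$, bounded since $\varepsilon<1-\delta\leq 1-\delta_1$. Continuity up to $t=0$ is automatic from the excision.

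For (i), since $\tau^2=\tilde a$ as a pointwise identity, the operator difference equals $Op(\tilde a-\tau\#\tau)=Op(\tau\cdot\tau-\tau\#\tau)$, where $\#$ denotes symbol composition in the calculus associated with $g_{\Phi,k}^{(1,\delta_1),(1-\delta_1,0)}$. The asymptotic expansion
\begin{linenomath*}
\[
\tau\#\tau-\tau\cdot\tau\sim \sum_{|\alpha|\geq 1}\frac{1}{\alpha!}\partial_\xi^\alpha\tau\cdot D_x^\alpha\tau
\]
\end{linenomath*}
gains one factor of the Planck function $h=\P^{-(1-\delta_1)}\japxik^{\delta_1-1}$ at each order, reducing the task to estimating the first-order term $\sum_j \partial_{\xi_j}\tau\cdot D_{x_j}\tau$ and the remainder. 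Exploiting the two-zone structure of $\tau$ from the preceding analysis (interior class $G^{1,1}\{0,0;\g,0\}_{int,2}(\o,g_{\Phi,k})$ and exterior class $G^{1,1}\{0,0,1,1;\g,\delta_1\}_{ext,1}(\o,g_{\Phi,k})$ from (\ref{tau4})), I would estimate $\partial_\xi\tau\cdot D_x\tau$ separately on $Z_{int}(2)$ and $Z_{ext}(2)$. In the interior, $|\partial_\xi\tau\cdot D_x\tau|\lesssim \o^2\japxik\P^{-1}\leq \o\japxik$ using $\o\leq\P$, so the $t^{1-\varepsilon}$ prefactor causes no difficulty. In the exterior, $|\partial_\xi\tau\cdot D_x\tau|\lesssim \japxik\o^2\P^{-1}t^{-\delta}\t^4$, and the combination $t^{1-\varepsilon-\delta}\t^4$ tends to $0$ as $t\to 0^+$ since $\varepsilon<1-\delta$. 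Higher-order terms in the expansion receive further Planck gains and are handled analogously, as is the derivative structure.

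The main obstacle lies in the careful accounting of weight powers and the interplay between $\t$ and $t^{1-\varepsilon}$. The crucial analytical fact is the elementary estimate $t^\alpha(\ln(1+1/t))^\g\to 0$ as $t\to 0^+$ for $\alpha,\g>0$, which is precisely what allows the loss of regularity to be arbitrarily small (parameterized by $\varepsilon$) rather than finite. Beyond this, the calculations are routine adaptations of those in \cite[Appendix]{RU2}, but must be executed separately on $Z_{int}(2)$ and $Z_{ext}(2)$ and then patched together along the boundary $t\P\japxik\sim 1$.
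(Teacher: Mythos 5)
Your overall strategy is the same as the paper's: the entire content of the paper's proof is the one-line remark that $t^{1-\varepsilon}\t^{1+|\alpha|+|\beta|}$ is bounded and continuous on $[0,T]$, which is exactly the elementary limit $t^{\alpha}(\ln(1+1/t))^{\g}\to 0$ that you isolate as the crucial analytical fact. Your elaboration for part (i) via the asymptotic expansion $\tau\#\tau - \tau\cdot\tau \sim \sum_{|\alpha|\geq 1}\frac{1}{\alpha!}\partial_\xi^\alpha\tau\, D_x^\alpha\tau$ together with the Planck-function gain is the natural unpacking of the paper's remark, and the identity $a-\tilde a = \varphi(t\P\japxik)(a-\o^2\japxik^2)$ for (ii) is correct and exactly reflects the definition (\ref{exci}).

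However, the ``order conversion'' step in your treatment of (ii) contains a genuine gap. You invoke $t\o\japxik\leq 2$ on $Z_{int}(2)$ to trade one factor of $\o\japxik$ for a factor $t^{-1}$, and then claim the $t^{1-\varepsilon}$ prefactor absorbs the leftover sublogarithmic growth. But after the trade the leftover factor is $t^{1-\varepsilon}\cdot t^{-1}\cdot\t = t^{-\varepsilon}\t$, and $t^{-\varepsilon}(\ln(1+1/t))^\g\to\infty$ as $t\to 0^+$ --- both factors diverge, so nothing is being absorbed. The trade actively spoils the estimate rather than improving it. If you skip the conversion entirely you do get a clean bound $t^{1-\varepsilon}|a-\tilde a| \lesssim (t^{1-\varepsilon}\t)\,\o^2\japxik^2 \lesssim \o^2\japxik^2$ using precisely the paper's observation, but that is a $G^{2,2}$ estimate, not $G^{1,1}$. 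Landing in $G^{1,1}$ would require lowering both orders by one, and the only mechanism available ($t\P\japxik\lesssim 1$ on the support) costs the unbounded factor you observed. Either the stated target class should be read as $G^{2,2}(\omega,g_{\Phi,k})$ (consistent with the surrounding text, which records $t^{\delta_1}(a-\tilde a)$ as bounded in $G^{2,2}$, and with the fact that only $B_0=(a-\tilde a)M^{-1}$, an order-$(1,1)$ object, enters the system), or one must accept a small loss $(\P\japxik)^{\varepsilon''}$ as in the preceding lemma for $\tau$. In either case, drop the $\o\japxik\mapsto t^{-1}$ conversion --- it neither matches the paper's reasoning nor produces the claimed bound.
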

	\begin{proof}
		The proof is a consequence of the fact that $t^{1-\varepsilon} \t^{1+|\alpha|+|\beta|}$ is bounded and continuous for all $t \in [0,T]$ and for all $\alpha,\beta \in \mathbb{N}_0^n.$ 
	\end{proof}
	
	We have the following factorization of the operator $P(t,x,\partial_t,D_x)$
	\begin{linenomath*}
		\begin{equation*}
			P(t,x,\partial_t,D_x) = (\partial_t-i\tau(t,x,D_x)) 	(\partial_t+i\tau(t,x,D_x))+ (a-\tilde{a})(t,x,D_x) + a_1(t,x,D_x)
		\end{equation*}
	\end{linenomath*}
	where the operator $a_1(t,x,D_x)$ is such that, for $t \in [0,T]$, 
	\begin{linenomath*}
		\begin{equation*}
			a_1 = -i[\partial_t,\tau] + \tilde{a} -\tau^2 + b \text{ and } t^{1-\varepsilon}a_1(t,x,D_x) \in OPG^{1+\varepsilon',1+\varepsilon'}(\Phi,g_{\Phi,k}^{(1,\delta),(1-\delta,0)}).
		\end{equation*}
	\end{linenomath*}

	\subsection{First Order Pseudodifferential System}
	Next we obtain a first order $2\times2$ pseudodifferential system equivalent to the operator $P$ by following a procedure similar to the one used in \cite{RU2,Cico1}. To this end, we introduce the change of variables $U=U(t,x)=(u_1(t,x),u_{2}(t,x))^T$, where
	\begin{equation}
		\label{COV}
		\begin{cases}
			u_1(t,x)= (\partial_t+i\tau(t,x,D_x))u(t,x), \\ 
			u_2(t,x)=  \o \la D_x \rak u(t,x) - H(t,x,D_x)u_1, \\  
		\end{cases}
	\end{equation}
	and the operator $H$ with the symbol $\sigma(H)(t,x,\xi)$ is such that
	\begin{linenomath*}
		\[
		\sigma(H)(t,x,\xi) = -\frac{i}{2}\o \japxik  \frac{\Big(1-\varphi\Big(t\P \japxik/3\Big)\Big)}{\tau(t,x,\xi)}.
		\]
	\end{linenomath*}
	By the definition of $H$,  $\text{supp } \sigma(H) \cap \text{supp } \sigma(a-\tilde{a}) = \emptyset$ and 
	\begin{linenomath*}
		\begin{align*}
			\sigma(2iH(t,x,D_x) \circ \tau(t,x,D_x) )&\sim 0,  \quad \text{ in } Z_{int}(3),\\
			\sigma(2iH(t,x,D_x) \circ \tau(t,x,D_x) )&= \o \la \xi \ra_k (1+\sigma(K_1)), \quad \text{ in } Z_{ext}(3),
		\end{align*}
	\end{linenomath*}
	where $\sigma(K_1) \in G^{-1,-1}\{0,0;\g,\delta_1\}_{int,6}(\omega,g_{\Phi,k}) + G^{-1,-1}\{0,1,2,1;\g,\delta_1\}_{ext,3}(\omega,g_{\Phi,k})$. Then, the equation $Pu=f$ is equivalent to the first order $2\times2$ system  :
	\begin{equation}
		\label{FOS1}
		\begin{aligned}
			LU &= (\partial_t + \mathcal{D}+A_0+A_1)U=F,\\
			U(0,x)&=(f_2+i\tau(0,x,D_x)f_1,\P\la D_x\ra f_1)^T ,
		\end{aligned}
	\end{equation}
	where
	\begin{linenomath*}
		\begin{align*}
			F&=(f(t,x) ,-H(t,x,D_x)f(t,x) )^T,\\
			\mathcal{D} &= \text{diag}(-i\tau(t,x,D_x),i\tau(t,x,D_x)),\\
			A_0 &= \begin{pmatrix}
				B_0H & B_0 \\
				-HB_0H & \quad HB_0
			\end{pmatrix}
			= \begin{pmatrix}
				\mathcal{R}_1 & B_0 \\
				-\mathcal{R}_3 & \mathcal{R}_2
			\end{pmatrix},\\
			A_1 &= \begin{pmatrix}
				B_1H & B_1 \\
				B_2 & \qquad i[M,\tau]M^{-1}-HB_1
			\end{pmatrix}.
		\end{align*}
	\end{linenomath*}
	The operators $M, M^{-1},B_0,B_1$ and $B_2$ are as follows
	\begin{linenomath*}
		\begin{align*}
			M &= \o\la D_x \rak, \quad
			M^{-1} = \la D_x \rak^{-1}  \o^{-1},  \\
			B_0 &= (a(t,x,D_x) -\tilde a (t,x,D_x))\la D_x \rak^{-1}  \o^{-1},  \\
			B_1 &= (-i\partial_t\tau(t,x,D_x) + \tilde{a}(t,x,D_x) -\tau(t,x,D_x)^2 + b(t,x,D_x)) \la D_x \rak^{-1}  \o^{-1},\\
			B_2 &= 2iH\tau-M+i[M ,\tau]M^{-1}H + i[\tau, H]-HB_1H+\partial_tH.
		\end{align*}
	\end{linenomath*}
	By the definition of operator $H$, we have $B_0H = \mathcal{R}_1, HB_0 =\mathcal{R}_2$, $HB_0H=\mathcal{R}_3$ for $\mathcal{R}_j\in G^{-\infty,-\infty}(\omega, g_{\Phi,k}),j=1,2,3,$ and the operator $2iH\tau-M$ is such that
	\begin{linenomath*}
		\[
		\sigma(2iH\tau-M) = \begin{cases}
			-\o\la \xi \rak, & \text{ in } Z_{int}(3),\\
			\o\la \xi \rak \sigma(K_1), & \text{ in } Z_{ext}(3).
		\end{cases}
		\]
	\end{linenomath*}
	The operators $\mathcal{D}, A_0$ and $A_1$ are such that
	\begin{linenomath*}
		\begin{equation}\label{As2}
			\begin{rcases}
				\begin{aligned}
					\sigma(\mathcal{D}) &\in G^{1,1}\{0,0;\g,0\}_{int,2}(\omega,g_{\Phi,k}) + G^{1,1}\{0,0,1,1;\g,\delta_1\}_{ext,1}(\omega,g_{\Phi,k}), \\
					\sigma(A_0) &\in G^{1,1}\{1,1;\g,\delta_1\}_{int,2}(\omega,{g}_{\Phi,k}) + G^{-\infty,-\infty}\{0,0,0,0;\g,0\}_{ext,3}(\omega,{g}_{\Phi,k}), \\
					\sigma(A_1) & \in G^{1,1}\{1,1;\g,\delta_1\}_{int,6}(\Phi,{g}_{\Phi,k}) + G^{0,0}\{1,0,1,1;\g,\delta\}_{ext,1}(\omega,{g}_{\Phi,k})\\ & \qquad+G^{0,0}\{0,1,2,1;\g,\delta\}_{ext,3}(\omega,{g}_{\Phi,k}),
				\end{aligned}
			\end{rcases}
		\end{equation}
	\end{linenomath*}
	where the last expression in (\ref{As2}) is obtained from the symbolic estimates on $\partial_t \tau(t,x,\xi)$ as given in (\ref{tau1}-\ref{tau4})
	and thus, by \cite[Propositions A.1-A.2, Remarks A.2-A.3]{RU2}, for every $\varepsilon < 1-\delta,$
	\begin{linenomath*}
		\begin{equation}\label{As}
			\begin{rcases}
				\begin{aligned}
					t^{1-\varepsilon}\sigma(A_0(t)) &\in C\left([0,T]; G^{1,1}(\omega,{g}_{\Phi,k}) \right),\\
					t^{1-\varepsilon}\sigma(A_1(t)) &\in 	C\left([0,T];G^{\varepsilon',\varepsilon'}(\Phi,{g}_{\Phi,k}^{(1,\delta),(1-\delta,0)}) \right).
				\end{aligned}
			\end{rcases}
		\end{equation}
	\end{linenomath*}
	We define positive functions $\psi_0, \psi _1\in L^1([0,T];C^\infty(\R^n)) \cap C^1((0,T];C^\infty(\R^n))$ 
	\begin{equation}\label{tpsi}
		\begin{aligned}
			\psi_0(t,x,\xi) &=  C_0\varphi(t\P\japxik/3)\t\o\japxik, \\
			\psi_1(t,x,\xi) &=  C_1\left(\varphi(t\P\japxik/3)\t \P\japxik + (1-\varphi(t\P\japxik))\frac{\t^{1-\frac{1}{\gamma}}}{t} \right),
		\end{aligned}
	\end{equation}
	for appropriate $C_0,C_1>0$, satisfying the estimates
	\[
	|\sigma(A_0)|  \leq \psi_0 \quad \text{ and } \quad |\sigma(A_1)|  \leq \psi_1.
	\]
	The function $\psi =  \psi_0+ \psi_1$ satisfies
	\begin{equation}
		\label{bd}
		\begin{aligned}
			\int^{T}_{0}| \psi (t,x,\xi)|dt
			&\leq \kappa_{00} (\ln(1+\Phi(x) \japxik))^{\gamma}\quad \text{ and } \\
			\int^{t}_{0}|\partial_\xi^\alpha D_x^\beta \psi (t,x,\xi)|dt
			&\leq \kappa_{\alpha \beta}\Phi(x)^{-|\beta|} \japxik^{-|\alpha|} (\ln(1+\Phi(x) \japxik))^\gamma \chi_{int}(6),
		\end{aligned}
	\end{equation}
	for $|\alpha| + |\beta| >0.$
	
	\subsection{Energy Estimate} \label{energy}
	It is sufficient to consider the case $s=(0,0)$ as the operator  
	\begin{linenomath*}
	$$\P^{s_2} \la D\rak^{s_1}L\la D\rak^{-s_1}\P^{-s_2}, \quad s=(s_1,s_2)$$ 
	\end{linenomath*}
	satisfies the same hypotheses as $L$.
	In the following, we establish some lower bounds for the operator $\mathcal{D}+A_0+A_1$. The symbol $d(t,x,\xi)$ of the operator $\mathcal{D}(t)+\mathcal{D}^*(t)$ is such that
	\begin{linenomath*}
		\[
		d \in G^{0,0}\{0,0;\g,0\}_{int,2}(\omega,g_{\Phi,k}) + G^{0,0}\{0,0,1,1;\g,\delta_1\}_{ext,1}(\omega,g_{\Phi,k}).
		\]
	\end{linenomath*}
	It follows from \cite[Proposition A.1 and Remark A.4]{RU2} that 
	\begin{linenomath*}
		\[
		t^{1-\varepsilon}d \in C([0,T];G^{0,0}(\omega,g_{\Phi,k}).
		\]
	\end{linenomath*}
	Thus
	\begin{equation}\label{lb1}
		2\Re \la \mathcal{D}{U},{U} \ra_{L^2} \geq -\frac{C}{t^{1-\varepsilon}} \la {U},{U} \ra_{L^2}, \quad C>0.
	\end{equation}
	
	We perform a following change of variable
	\begin{equation}\label{c1}
		V_1(t,x)= e^{-\int_{0}^{t} \psi(r,x,D_x)dr}{U}(t,x),
	\end{equation}
	where $ \psi(t,x,\xi)$ is as in (\ref{tpsi}). Observe that the operator $e^{\pm\int_{0}^{t} \psi(r,x,D_x)dr}$ is a pseudodifferential operator of arbitrarily small positive order . Applying \cite[Lemma A.7]{RU2} to the identity operator we see that
	\begin{equation}\label{LRI}
		\begin{aligned}
			e^{\int_{0}^{t} \psi(r,x,D_x)dr} e^{-\int_{0}^{t} \psi(r,x,D_x)dr} & = I + K_2^{(1)}(t,x,D_x),\\
			e^{-\int_{0}^{t} \psi(r,x,D_x)dr} e^{\int_{0}^{t} \psi(r,x,D_x)dr} & = I + K_2^{(2)}(t,x,D_x),
		\end{aligned}
	\end{equation}
	where for every $\tilde\varepsilon \ll 1$ and $j=1,2,$
	\begin{linenomath*}
		$$
		\begin{aligned}
			(\ln(1+\P\japxik))^{-\gamma} \sigma(K_2^{(j)}) \in &G^{(-1+\tilde \varepsilon)e}\{0,0;\g,0\}_{int,6} (\Phi, g_{\Phi,k}) \\ &\quad + G^{-e}\{0,0,0,0;\g,0\}_{ext,1} (\Phi, g_{\Phi,k}).
		\end{aligned}
		$$  
	\end{linenomath*}
	By \cite[Proposition A.2]{RU2},
	$\sigma(K_2^{(j)})\in G^{(-1+\varepsilon)e}(\omega;g_{\Phi,k}).$ We choose $k>k_1$ for large $k_1$ so that the operator norm of $K_2^{(j)}, j=1,2$ is strictly lesser than $1$ and the existence of 
	\begin{equation}\label{inv1}
		(I+K_2^{(j)}(t,x,D_x))^{-1} = \sum_{l=0}^{\infty} (-1)^jK_2^{(j)}(t,x,D_x)^l, \quad j=1,2, 
	\end{equation}
	is guaranteed. The equation (\ref{c1}) implies that
	\begin{equation}
		\label{IstCV}
		\begin{rcases}
			\begin{aligned}
				U(0,x) & = V_1(0,x),\\
				\Vert{U}(t,\cdot)\Vert_{\Phi,k;(0,0),-\kappa_0,\g} &\leq 2\Vert V_1(t,\cdot)\Vert_{L^2} \text{ , } \kappa_0>0 	\text{ , }0<t\leq T, \\
				U(t,x) &= (I+K_2^{(1)}(t,x,D_x))^{-1} e^{\int_{0}^{t} \psi(r,x,D_x)dr}V_1(t,x).
			\end{aligned}
		\end{rcases}
	\end{equation}
	Here $\kappa_0$ is same as $\kappa_{\alpha\beta}$ appearing in (\ref{bd}) for $\alpha=\beta=0$ . For $L$ as in (\ref{FOS1}), we have 
	\begin{linenomath*}
		\[
		LU = (\partial_t+\mathcal{D}+A)(I+K_2^{(1)}(t,x,D_x))^{-1} e^{\int_{0}^{t} \psi(r,x,D_x)dr}V_1 = F, \; A=A_0+A_1.
		\] 
	\end{linenomath*}
	Use of \cite[Lemma A.7]{RU2} yields a first order pseudodifferential system, formally equivalent to the one in (\ref{FOS1}), $L_1V_1 = F_1$ where
	\begin{linenomath*}
		\begin{equation*} 
			\begin{rcases}
				L_1 = \partial_t+\mathcal{D}+ \psi I+A+R_1,\; A=A_0+A_1,\\
				F_1 = (I+K_2^{(2)}(t,x,D_x))^{-1}e^{-\int_{0}^{t} \psi(r,x,D_x)dr} (I+K_2^{(1)}(t,x,D_x)) F.		
			\end{rcases}
		\end{equation*}
	\end{linenomath*}
	Here the operators $\mathcal{D}, A_0,A_1$ are as in (\ref{FOS1})-(\ref{As2}). Due to \cite[Lemma A.7]{RU2}, for an arbitrary small $\tilde\varepsilon>0,$
	\begin{linenomath*}
		\[
		\begin{aligned}
			(\ln&(1+\P\japxik))^{-\g} \sigma(R_1) \\
			&\in G^{\tilde\varepsilon,1}\{1,1;\g,\delta_1\}_{int,6} (\omega\Phi^{-1+\tilde\varepsilon}, g_{\Phi,k}) + 	G^{0,1}\{0,0,1,1;\g,\delta\}_{ext,1} (\omega\Phi^{-1}, g_{\Phi,k})\\
			& \quad + G^{-1,1}\{1,0,1,1;\g,\delta\}_{ext,1} (\Phi^{-1}, g_{\Phi,k}) + G^{-1,1}\{0,1,2,1;\g,\delta\}_{ext,3} (\Phi^{-1}, g_{\Phi,k}).
		\end{aligned}
		\]	
	\end{linenomath*}
	Using the compensation procedure outlined in \cite[Remark A.4]{RU2}, one can show that 
	\begin{linenomath*}
		$$
		t^{1-\varepsilon}(\ln(1+\Phi(x) \la \xi \ran))^{-\gamma} \sigma(R_1) \in {C([0,T];G^{0,0}(\Phi;g^{(1,\delta),(1-\delta,0)}_{\Phi,k})}, \quad 0<\varepsilon<1-\delta.
		$$
	\end{linenomath*}
	
	For an appropriate choice of $C_0,C_1>0$ in the definition of $ \psi$ as in (\ref{tpsi}-\ref{bd}), we observe that $ \psi I + A $ satisfies 
	\begin{linenomath*}
		\begin{align*}
			2 \psi I+\sigma(A+&A^*)\geq 0,\\
			t^{1-\varepsilon}( \psi I+\sigma(A)) &\in C([0,T];G^{\varepsilon',1}(\Phi^{\varepsilon'}, g_{\Phi,k}^{(1,\delta),(1-\delta,0)})).
		\end{align*}
	\end{linenomath*}
	Here $A=A_0+A_1$ with $A_0$ and $A_1$ as in (\ref{FOS1}-\ref{As}).
	We now apply sharp G\r{a}rding inequality (see \cite[Theorem 18.6.14]{Horm} to the operators $2{\psi}_0I+A_0$ and $2{\psi}_1I+A_1$ separately. The symbols of these operators are governed by the metrics $g_{\Phi,k}$  and $g_{\Phi,k}^{(1,\delta),(1-\delta,0)}$ respectively where the respective Planck functions are $h(x,\xi)=(\P \la \xi \rak)^{-1}$ and $\tilde h(x,\xi)=\Phi(x)^{-1+\delta} \la \xi \rak ^{-1+\delta}.$ Notice that the symbol $\sigma(A_0)$ has the weight function $\o\japxik$ while the Planck function of the governing metric is given by $ h(x,\xi)$. Hence, for the application of sharp G\r{a}rding inequality, we need 
	\begin{linenomath*}
		\[
		\o \lesssim \P.
		\]
	\end{linenomath*}
	Ensuring this yields 
	\begin{equation}
		\label{lb2}
		2\Re \la (\psi I+A)V_1, V_1\ra_{L^2} \geq -Ct^{-1+\varepsilon}\langle V_1,V_1 \rangle_{L^2} \text{ , }C>0.
	\end{equation}

	As for the operator $R_1$, since the symbol $t^{1-\varepsilon}(\ln(1+\Phi(x) \langle \xi \rangle))^{-\g}R_1 $ is uniformly bounded, for a large choice of $\kappa_1$, the application of sharp G\r{a}rding inequality yields		
	\begin{equation}
		\label{R1}
		2 \Re \langle R_1V_1,V_1 \rangle_{L^2} \geq -\frac{\kappa_1}{t^{1-\varepsilon}} \big(2 \Re \la (\ln(1+\Phi(x) \la D_x \rak))^\g V_1,V_1 \ra_{L^2} + \Vert V_1\Vert_{L^2}\big).
	\end{equation}
	We make a further change of variable
	\begin{linenomath*}
		\begin{equation}\label{c2}
			V_2(t,x)= e^{-\mu(t) (\ln(1+\Phi(x) \la D_x \rak))^{\g} }V_1(t,x), \qquad  \mu(t)=\kappa_1t^\varepsilon/\varepsilon,
		\end{equation}
	\end{linenomath*}
	where $\kappa_1$ is the constant as in (\ref{R1}). Let
	\begin{equation*} 
		e^{\pm\mu(t) (\ln(1+\Phi(x) \la D_x \rak))^{\g} } e^{\mp \mu(t) (\ln(1+\Phi(x) \la D_x \rak))^{\g} } = I + K_3^{(\pm)}(t,x,D_x),
	\end{equation*}
	where $\sigma(K_3^{(\pm)}) \in C([0,T]; G^{-1,-1}(\omega, g_{\Phi,k}) )$. As in (\ref{inv1}), we choose $k>k_2$, $k_2$ large, so that $(I+K_3^{(\pm)}(t,x,D_x))^{-1}$ exists. From now on we fix $k$ such that $k>\max\{k_1,k_2\}$. Further, note that
	\begin{linenomath*}
		\begin{equation}\label{V1}
			\begin{rcases}
				\begin{aligned}
					V_2(0,x) &= U(0,x), \\
					\Vert{U}(t,\cdot)\Vert_{\Phi,k;(0,0),-\Lambda(t),\g} &\leq 2^{\mu(T)+1}\Vert V_2(t,\cdot)\Vert_{L^2}, \quad \Lambda(t)=\kappa_0+\kappa_1t^\varepsilon/\varepsilon \text{ , } 0 < t \leq T,\\
					V_1(t,x) &= (1+K_3^{(+)}(t,x,D_x))^{-1} e^{\mu(t) (\ln(1+\Phi(x) \la D_x \rak))^{\g} }V_2(t,x).
				\end{aligned}
			\end{rcases}
		\end{equation}
	\end{linenomath*}
	This implies that $L{U}=F$ if and only if $L_2V_2=F_2$ where
	\begin{equation}\label{last}
		\begin{rcases}
			L_2= \partial_t + \mathcal{D}+( \psi I+A)+(\kappa_1t^{-1+\varepsilon} (\ln (1+\Phi(x) \la 	D_x\ran))^\g+R_1)+R_2 \\
			F_2= (1+K_3^{(-)}(t,x,D_x))^{-1}e^{-\mu(t) (\ln(1+\Phi(x) \la D_x \rak))^{\g} } (1+K_3^{(+)}(t,x,D_x)) F_1
		\end{rcases}
	\end{equation}
	and the operator $R_2$ is such that
	\begin{linenomath*}
		\[
		\begin{aligned}
			\sigma(R_2)(t,x,\xi)  &\in G^{0,0}\{1,1;\g,\delta_1\}_{int,6} (\Phi, g_{\Phi,k}) + 	G^{0,0}\{1,0,1,1;\g,\delta\}_{ext,1} (\Phi, g_{\Phi,k}),
		\end{aligned}
		\]
	\end{linenomath*}
	in other words $t^{1-\varepsilon}\sigma(R_2) \in {C([0,T];G^{0,0}(\Phi;g^{(1,\delta),(1-\delta,0)}_{\Phi,k})}.$
	From (\ref{lb1}), (\ref{lb2}), (\ref{R1}) and noting the fact that the operator $t^{1-\varepsilon}R_2$ is uniformly bounded in $L^2(\mathbb{R}^n)$ for $0 \leq t \leq T$, it follows that
	\begin{equation}
		\label{K}
		2 \Re \langle \mathcal{K}V_2,V_2 \rangle_{L^2} \geq -\frac{C}{t^{1-\varepsilon}}\la V_2,V_2 \ra_{L^2}, \quad C>0,
	\end{equation}
	where $\mathcal{K} = \mathcal{D}+(\psi I+A)+(\kappa_1t^{-1+\varepsilon} (\ln (1+\Phi(x) \la 	D_x\ran))^\g+R_1)+R_2$.
	From (\ref{last}) and (\ref{K}), we have
	\begin{linenomath*}
		\[
		\partial_t \Vert V_2 \Vert_{L^2}^2 \leq C(t^{-1+\varepsilon} \Vert V_2 \Vert_{L^2}^2 + \Vert F_2 \Vert_{L^2}^2).
		\]
	\end{linenomath*}
	We apply Gronwall's lemma to obtain
	\begin{linenomath*}
		\[
		\Vert  V_2(t,\cdot)\Vert _{L^2}^2\leq e^{Ct^\varepsilon/\varepsilon} \left( \Vert  V_2(0,\cdot)\Vert ^2_{L^2} + \int_{0}^{t}\Vert F_2(\tau,\cdot)\Vert_{L^2}d\tau \right),
		\]
	\end{linenomath*}
	for $t \in [0,T]$. In other words, from (\ref{V1}),
	\begin{linenomath*}
		\begin{equation}\label{eq21}
			\Vert {U}(t,\cdot)\Vert_{\Phi,k;s,-\Lambda(t),\g}^2\leq C_{\varepsilon} \left(\Vert {U}(0,\cdot)\Vert ^2_{\Phi,k;s} + \int_{0}^{t}\Vert F(\tau,\cdot)\Vert_{\Phi,k;s,-\Lambda(\tau),\gamma}\;d\tau \right).
		\end{equation}
	\end{linenomath*}
	Here $C_{\varepsilon} = 4^{\alpha(T)+1}e^{CT^\varepsilon/\varepsilon}$.
	Returning to our original solution $u=u(t,x)$, we obtain 
	\begin{linenomath*}
		\[
		\begin{aligned}
			\sum_{j=0}^{1} \Vert \partial_t^j &u(t,\cdot) \Vert_{\Phi,k;s+(1-j)e,-\Lambda(t),\gamma} \\ 
			&\leq  C_{\varepsilon} \Bigg(\sum_{j=1}^{2} \Vert f_j\Vert_{\Phi,k;s+(2-j)e} 
			+ \int_{0}^{t}\Vert f(\tau,\cdot)\Vert_{\Phi,k;s,-\Lambda(\tau),\gamma}\;d\tau\Bigg),
		\end{aligned}
		\]
	\end{linenomath*}
	for $0 \leq t \leq T$. This means that the original problem (\ref{eq1}) is well-posed for $u=u(t,x)$, with 
	\begin{linenomath*}
		\[
		u \in C([0,T];\mathcal{H}^{s+e,-\Lambda(t),\g}_{\Phi,k}) \cap C^{1}([0,T];\mathcal{H}^{s,-\Lambda(t),\g}_{\Phi,k}).
		\]
	\end{linenomath*}
	
	This shows that when (\ref{bl1}) is satisfied one has atmost an arbitrarily small loss. When (\ref{bl2}) is satisfied instead of (\ref{bl1}), the proof follows in similar lines. Note that the condition (\ref{bl2}) suggests that the the coefficients are bounded in $t$. This implies that the majorizing functions in (\ref{tpsi}) are zero order symbols in both $x$ and $\xi$ as $\delta_3 \in [0,1)$ in (\ref{bl2}). Implying that the elliptic operators involoved in the changes of variable as in (\ref{c1}) and (\ref{c2}) are zero order pseudodifferential operators and hence one can take $\Lambda(t)= 0$ in the above discussion. This suggests that there is no loss in regularity.
	Thus concludes the proof.
	
	\section{Cone Condition}\label{cone}
	In the following, we prove the existence of cone of dependence for the Cauchy problem (\ref{eq1}).	
	We note here that the $L^1$ integrability of the sublogarithmic singularity plays a crucial role in arriving at the finite propagation speed. The implications of the discussion in \cite[Section 2.3 \& 2.5]{JR} to the global setting suggest that if the Cauchy data in (\ref{eq1}) is such that $f \equiv 0$ and $f_1,f_2$ are supported in the ball $\vert x \vert \leq R$, then the solution to Cauchy problem (\ref{eq1}) is supported in the ball $\vert x \vert \leq R+c^* \o \t t$. Note that the support of the solution increases as $|x|$ increases since $\o$ is monotone increasing function of $|x|.$ Recall that $\t=(\ln(1+1/t))^{\g}.$ The quantity $t\t$ is bounded in $[0,T]$. The constant $c^*$ is such that the quantity $c^* \o \t$ dominates the characteristic roots, i.e.,
	\begin{equation}
		\label{speed}
		c^*= \sup\Big\{\sqrt{a(t,x,\xi)}\o^{-1} \t^{-1}:(t,x,\xi) \in[0,T] \times \R^n_x \times \R^n_\xi,\:|\xi|=1\Big\}.
	\end{equation}
	
	In the following we prove the cone condition for the Cauchy problem $(\ref{eq1})$ as in \cite[Section 3.11]{yag}. Let $K(x^0,t^0)$ denote the cone with the vertex $(x^0,t^0)$:
	\begin{linenomath*}
		\[
		K(x^0,t^0)= \{(t,x) \in [0,T] \times \R^n : |x-x^0| \leq c^*\o \theta(t^0-t)(t^0-t)\}.
		\]
	\end{linenomath*}
	Observe that the slope of the cone is anisotropic, that is, it varies with both $x$ and $t$.
	
	\begin{prop}
		The Cauchy problem (\ref{eq1}) has a cone dependence, that is, if
		\begin{equation}\label{cone1}
			f\big|_{K(x^0,t^0)}=0, \quad f_i\big|_{K(x^0,t^0) \cap \{t=0\}}=0, \; i=1, 2,
		\end{equation}
		then
		\begin{equation}\label{cone2}
			u\big|_{K(x^0,t^0)}=0.
		\end{equation}
	\end{prop}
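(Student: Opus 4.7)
The plan is to adapt the classical energy-on-cone argument of \cite[Section 3.11]{yag} (already cited) to the present anisotropic and sublogarithmically singular setting, using in an essential way the $L^1(0,T)$ integrability of $t^{-1}|\ln t|^{\gamma-1}$ guaranteed by $\gamma\in(0,1)$. Fix $(x^0,t^0)$ and, for $t\in[0,t^0]$, denote by $B(t)=\{x\in\R^n:|x-x^0|\le c^*\,\omega(x)\,\theta(t^0-t)(t^0-t)\}$ the cross-section of $K(x^0,t^0)$, and introduce the local energy
$$E(t):=\int_{B(t)}\Bigl(|\partial_t u(t,x)|^2+\sum_{i,j=1}^{n}a_{i,j}(t,x)\partial_{x_i}u\,\overline{\partial_{x_j}u}+|u(t,x)|^2\Bigr)dx.$$
Hypothesis (\ref{cone1}) gives $E(0)=0$, and the conclusion (\ref{cone2}) is equivalent to $E\equiv 0$ on $[0,t^0]$.

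To give rigorous meaning to $E$ despite the $t=0$ singularity of the coefficients, I would first regularize: set $a_{i,j}^{\varepsilon}(t,x):=a_{i,j}(\max\{t,\varepsilon\},x)$ and likewise for $b_j,b_{n+1}$. The regularized coefficients are Lipschitz in $t$, so the associated Cauchy problem falls within classical strictly hyperbolic theory and admits a solution $u^{\varepsilon}$ for which the corresponding $E^{\varepsilon}$ is well-defined, $C^1$ in $t$, and satisfies $E^{\varepsilon}(0)=0$. Differentiating $E^{\varepsilon}(t)$ and using the PDE (with $f\equiv 0$ on $K(x^0,t^0)$) to eliminate $\partial_t^2 u^{\varepsilon}$ produces, after integration by parts, an interior contribution controlled by $\int_{B(t)}|\partial_t a_{i,j}^{\varepsilon}||\partial_i u^{\varepsilon}||\partial_j u^{\varepsilon}|\,dx$ plus lower-order terms, which by (\ref{bl1}) is majorized by $Ct^{-1}|\ln t|^{\gamma-1}E^{\varepsilon}(t)$ uniformly in $\varepsilon$, together with a boundary term on $\partial B(t)$ of the schematic form
$$-\int_{\partial B(t)}\!\mathrm{v}(t,x)\Bigl(|\partial_t u^{\varepsilon}|^2+\!\!\sum_{i,j}a_{i,j}\partial_i u^{\varepsilon}\overline{\partial_j u^{\varepsilon}}+|u^{\varepsilon}|^2\Bigr)d\sigma+2\!\int_{\partial B(t)}\!\Re\Bigl(\partial_t u^{\varepsilon}\sum_{i,j}a_{i,j}\nu_j\overline{\partial_i u^{\varepsilon}}\Bigr)d\sigma,$$
where $\mathrm{v}(t,x)$ is the inward normal speed of $\partial B(t)$ and $\nu$ its outward unit normal. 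The definition (\ref{speed}) of $c^*$ ensures $\mathrm{v}(t,x)\ge\sqrt{a(t,x,\nu(x))}$ on $\partial B(t)$, and Cauchy--Schwarz applied to the quadratic form $a$ then renders the sum of the two boundary integrals non-positive, so they may be discarded.

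What remains is $(E^{\varepsilon})'(t)\le\Psi(t)\,E^{\varepsilon}(t)$ with $\Psi(t)=C(1+t^{-1}|\ln t|^{\gamma-1})\in L^1(0,t^0)$ precisely because $\gamma\in(0,1)$, so Gronwall's lemma forces $E^{\varepsilon}\equiv 0$ on $[0,t^0]$, i.e., $u^{\varepsilon}\big|_{K(x^0,t^0)}=0$. Passing to the limit $\varepsilon\to 0$ via the a priori estimate (\ref{est2}) of Theorem \ref{result1}, whose constants depend only on the symbolic bounds of the coefficients that are preserved under the regularization, yields $u^{\varepsilon}\to u$ in $C([0,T];\mathcal{H}^{s+e,-\Lambda(t),\gamma}_{\Phi,k})$, and the cone-vanishing is preserved in the limit to give (\ref{cone2}). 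The main obstacle is the boundary step: verifying that the boundary contribution has the correct sign in the \emph{anisotropic} setting where the slope of $\partial B(t)$ depends on $x$ through $\omega(x)$, so that the implicit-function computation of $\mathrm{v}(t,x)$ is only approximately $c^*\omega(x)\theta(t^0-t)$; pinning down the correct pointwise comparison $\mathrm{v}(t,x)\ge\sqrt{a(t,x,\nu(x))}$ is precisely (\ref{speed}), but the slowly varying factor $\omega$ must be handled carefully when differentiating the implicit relation for $\partial B(t)$.
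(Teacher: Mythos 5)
Your proposal follows a genuinely different route than the paper. The paper never performs an energy-on-cone computation at all: it sets $P_{\varepsilon}=P(\cdot+\varepsilon,\cdot)$ (a time-\emph{shift}, not a time-\emph{cutoff}), takes for granted that the finite-speed property holds for the regularized problems since their coefficients are smooth down to $t=0$, and then applies the a priori estimate (\ref{est2}) to the \emph{difference} $v_{\varepsilon_1}-v_{\varepsilon_2}$, Taylor-expanding $a_{i,j}(\tau+\varepsilon_1,x)-a_{i,j}(\tau+\varepsilon_2,x)$ and using the $L^1$ integrability of the sublogarithmic singularity to make that difference vanish as $\varepsilon_k\to0$. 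You instead try to prove the finite-speed property for the regularized problem directly by differentiating a local energy over the cone cross-sections and arguing a sign for the boundary term. What the paper's route buys is precisely the avoidance of your ``main obstacle.''

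That obstacle is a real gap, and it is not a technicality. The cross-section $B(t)=\{x:|x-x^0|\le c^*\omega(x)\theta(t^0-t)(t^0-t)\}$ is defined \emph{implicitly} in $x$ through $\omega(x)$, so $\partial B(t)$ is not a sphere and its normal $\nu(x)$ and inward normal speed $\mathrm{v}(t,x)$ must be obtained by differentiating the implicit relation; the resulting $\mathrm{v}$ picks up a factor involving $\nabla\omega$ and is only \emph{approximately} $c^*\omega(x)\theta(t^0-t)$. Meanwhile, the definition (\ref{speed}) of $c^*$ is a global supremum of $\sqrt{a}\,\omega^{-1}\theta^{-1}$, which gives $\sqrt{a(t,x,\xi)}\le c^*\omega(x)\theta(t^0-t)|\xi|$ pointwise, not the pointwise inequality $\mathrm{v}(t,x)\ge\sqrt{a(t,x,\nu(x))}$ on $\partial B(t)$ that you need. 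Bridging this requires either enlarging $c^*$ by a constant that absorbs the slowly-varying error in $\nabla\omega$ (which would change the cone, so the proposition as stated would then only hold for a suitably fattened cone unless one argues that the enlargement can be undone), or establishing the pointwise comparison from scratch. As written the step is asserted but not closed.

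A secondary gap: the passage $u^{\varepsilon}\to u$ is justified only by citing (\ref{est2}), but that estimate gives uniform bounds, not convergence. You still need to show $(u^{\varepsilon_k})$ is Cauchy, which means applying the a priori estimate to $u^{\varepsilon_1}-u^{\varepsilon_2}$ and estimating $(P^{\varepsilon_1}-P^{\varepsilon_2})u^{\varepsilon_2}$; this is exactly the content of the paper's Taylor-expansion step (\ref{cone3}) and the $E(\tau,\varepsilon_1,\varepsilon_2)\to0$ argument. So even if the boundary step were closed, your plan ends up re-deriving the core of the paper's argument anyway, with the energy-on-cone computation as an extra (and unresolved) layer.
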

	\begin{proof}
		Consider $t^0>0$, $\gamma>0$ and assume that  (\ref{cone1}) holds. We define a set of operators $P_\varepsilon(t,x,\partial_t,D_x), 0 \leq \varepsilon \leq \varepsilon_0$ by means of the operator $P(t,x,\partial_t,D_x)$ in (\ref{eq1}) as follows
		\begin{linenomath*}
			\[
			P_\varepsilon(t,x,\partial_t,D_x) = P(t+\varepsilon,x,\partial_t,D_x), \: t \in [0,T-\varepsilon_0], x \in \R^n,
			\]
		\end{linenomath*}
		and $\varepsilon_0 < T-t^0$, for a fixed and sufficiently small $\varepsilon_0$. For these operators we consider Cauchy problems
		\begin{linenomath*}
			\begin{alignat*}{2}
				P_\varepsilon v_\varepsilon & =f,  &&  t \in [0,T-\varepsilon_0], \; x \in \R^n,\\
				\partial_t^{k-1}v_\varepsilon(0,x)& =f_k(x),\qquad && k=1,2.
			\end{alignat*}
		\end{linenomath*}
		Note that $v_\varepsilon(t,x)=0$ in $K(x^0,t^0)$ and $v_\varepsilon$ satisfies an a priori estimate (\ref{est2}) for all $t \in[0,T-\varepsilon_0]$. Further, we have 
		\begin{linenomath*}
			\begin{alignat*}{2}
				P_{\varepsilon_1} (v_{\varepsilon_1}-v_{\varepsilon_2}) & = (P_{\varepsilon_2}-P_{\varepsilon_1})v_{\varepsilon_2},\qquad  &&  t \in [0,T-\varepsilon_0], \; x \in \R^n,\\
				\partial_t^{k-1}(v_{\varepsilon_1}-v_{\varepsilon_2})(0,x)& = 0,\qquad && k=1,2.
			\end{alignat*}
		\end{linenomath*}
		Since our operator is of second order, for the sake of simplicity we denote $b_{j}(t,x)$, the coefficients of lower order terms, as $a_{0,j}(t,x), 1 \leq j \leq n,$ and $b_{n+1}(t,x)$ as $a_{0,0}(t,x)$. Let $a_{i,0}(t,x) =0, \; 1 \leq i \leq n.$ Substituting $s-e$ for $s$ in the a priori estimate, we obtain
		\begin{equation}\label{cone3}
			\begin{aligned}
				&\sum_{j=0}^{1} \Vert  \partial_t^j(v_{\varepsilon_1}-v_{\varepsilon_2})(t,\cdot) \Vert_{\Phi,k;s-(j+\Lambda(t))e} \\
				&\leq C \int_{0}^{t}\Vert (P_{\varepsilon_2}-P_{\varepsilon_1})v_{\varepsilon_2}(\tau,\cdot)\Vert_{\Phi,k;s-e-\Lambda(\tau)e}\;d\tau\\
				&\leq C \int_{0}^{t} \sum_{i,j=0 }^{n}\Vert (a_{i,j}(\tau+\varepsilon_1,x) - a_{i,j}(\tau+\varepsilon_2,x))D_{ij} v_{\varepsilon_2}(\tau,\cdot)\Vert_{\Phi,k;s-e-\Lambda(\tau)e}\;d\tau,
			\end{aligned}
		\end{equation}
		where $D_{00}=I, D_{i0}=0, i\neq 0, D_{0j}=\partial_{x_j},j \neq 0$ and $D_{ij}=\partial_{x_i} \partial_{x_j}, i,j \neq 0$. Using the Taylor series approximation in $\tau$ variable, we have
		\begin{linenomath*}
			\begin{align*}
				|a_{i,j}(\tau+\varepsilon_1,x) - a_{i,j}(\tau+\varepsilon_2,x)| &= \Big|\int_{\tau+\varepsilon_2}^{\tau+\varepsilon_1} (\partial_ta_{i,j})(r,x)dr \Big|\\
				&\leq \o^{2} \Big|\int_{\tau+\varepsilon_2}^{\tau+\varepsilon_1}\frac{\t^{1-\frac{1}{\g}}}{r}dr\Big|\\
				&\leq \o^{2}|E(\tau,\varepsilon_1,\varepsilon_2)|,
			\end{align*}
		\end{linenomath*}
		where
		\begin{linenomath*}
			\[
			E(\tau,\varepsilon_1,\varepsilon_2) =		
			\left(\ln \Bigg(1+\frac{\varepsilon_1-\varepsilon_2}{\tau+\varepsilon_2}\Bigg)\right)^\g . 
			\]
		\end{linenomath*}
		Note that $\o \lesssim \P$ and $E(\tau,\varepsilon,\varepsilon)=0$.
		Then right-hand side of the inequality in (\ref{cone3}) is dominated by
		\begin{linenomath*}
			\begin{equation*}\label{cone4}
				C \int_{0}^{t} |E(\tau,\varepsilon_1,\varepsilon_2)| \Vert  v_{\varepsilon_2}(\tau,\cdot)\Vert_{\Phi,k;s+(1-\Lambda(\tau))e}\;d\tau,
			\end{equation*}
		\end{linenomath*}
		where $C$ is independent of $\varepsilon$. By definition, $E$ is $L_1$-integrable in $\tau$.
		
		The sequence $v_{\varepsilon_k}$, $k=1,2,\dots$ corresponding to the sequence $\varepsilon_k \to 0$ is in the space
		\begin{linenomath*}
			\[
			C\Big([0,T^*];H^{s-\nu e,}_{\Phi,k}\Big) \bigcap C^{1}\Big([0,T^*];H^{s-e-\nu e,}_{\Phi,k}\Big), \quad T^*>0,
			\]
		\end{linenomath*}
		for an arbitrarily small $\nu>0$ and $u=\lim\limits_{k\to\infty}v_{\varepsilon_k}$ in the above space and hence, in $\mathcal{D}'(K(x^0,t^0))$. In particular,
		\begin{linenomath*}
			\[
			\la u,\varphi\ra = \lim\limits_{k\to\infty} \la v_{\varepsilon_k}, \varphi \ra =0,\; \forall \varphi \in \mathcal{D}(K(x^0,t^0))
			\]
		\end{linenomath*}
		gives (\ref{cone2}) and completes the theorem. 
	\end{proof}
	\newpage
	
	\hspace{-0.6cm}{\bf Funding:} The first author is funded by the University Grants Commission, Government of India, under its JRF and SRF schemes.\\\\	
	{\bf Data Availability Statement:} Data sharing not applicable to this article as no datasets were generated or analysed during the current study.
	
	\section*{Declarations}
	{\bf Conflicts of Interests:} The authors declare that they have no competing interests.\\\\

	\addcontentsline{toc}{section}{References}

\end{document}